\documentclass[a4paper,10pt]{article}
\usepackage{amssymb}
\usepackage{graphicx}
\usepackage{caption}
\usepackage{subcaption}
\usepackage{psfrag}
\usepackage{psfrag,picture}
\usepackage{stmaryrd}
\usepackage{amsmath}
\usepackage{pstricks,pst-3dplot}
\usepackage{booktabs}
\usepackage{bm}
\usepackage{amsthm}

\newtheorem{theorem}{Theorem}[section]
\newtheorem{lemma}[theorem]{Lemma}
\newcommand{\vertiii}[1]{{\left\vert\kern-0.25ex\left\vert\kern-0.25ex\left\vert #1 
    \right\vert\kern-0.25ex\right\vert\kern-0.25ex\right\vert}}
\newenvironment{remark}[1][Remark]{\begin{trivlist}
\item[\hskip \labelsep {\bfseries #1}]}{\end{trivlist}}

\title{Convergence of summation--by--parts finite difference methods for the wave equation}

\author{Siyang Wang\footnotemark[1]\ \footnotemark[2]
\and Gunilla Kreiss\footnotemark[2]}
\date{}
\begin{document}
\maketitle

\renewcommand{\thefootnote}{\fnsymbol{footnote}} 
\footnotetext[1]{Corresponding author, email: {siyang.wang@it.uu.se}}
 \footnotetext[2]{Department of Information Technology, Uppsala University, Uppsala, SE-751 05, Sweden}
\renewcommand{\thefootnote}{\arabic{footnote}}

\begin{abstract}
In this paper, we consider finite difference approximations of the second order wave equation.  We use finite difference operators satisfying the summation--by--parts property to discretize the equation in space. Boundary conditions and grid interface conditions are imposed by the simultaneous--approximation--term technique. Typically, the truncation error is larger at the grid points near a boundary or grid interface than that in the interior. Normal mode analysis can be used to analyze how the large truncation error affects the convergence rate of the underlying stable numerical scheme. If the semi--discretized equation satisfies a determinant condition, two orders are gained from the large truncation error. However, many interesting second order equations do not satisfy the determinant condition. We then carefully analyze the solution of the boundary system to derive a sharp estimate for the error in the solution and acquire the gain in convergence rate. The result shows that stability does not automatically yield a gain of two orders in convergence rate. The accuracy analysis is verified by numerical experiments. 
\end{abstract}

\textbf{Keywords}:
Second order wave equation, SBP-SAT, Finite difference, Accuracy, Convergence, Determinant condition, Normal mode analysis

\textbf{AMS subject classifications}:
65M06, 65M12, 65M15

\pagestyle{myheadings}
\thispagestyle{plain}

\section{Introduction}
In many physical problems, such as acoustics, seismology, and electromagnetism, the underlying equations can be formulated as systems of second order time dependent hyperbolic partial differential equations. For wave propagation problems, it has been shown that high order accurate time marching methods as well as high order spatial discretization are more efficient to solve these problems on smooth domains \cite{Gustafsson2008,Kreiss1972}. The major difficulties with high order spatial discretization are the numerical treatment of boundary conditions, grid interface conditions and interface conditions at material interfaces.

High order finite difference methods have been widely used for wave propagation problems. We use summation-by-parts (SBP) finite difference operators \cite{Mattsson2013,Mattsson2004,Svard2014} to approximate spatial derivatives. In order to guarantee strict stability and maintain high accuracy, boundary conditions and grid interface conditions are imposed weakly by the simultaneous--approximation--term (SAT) technique \cite{Appelo2007,Carpenter1994,Mattsson2008,Mattsson2009}. This is often referred to as the SBP--SAT methodology, and a summary of it is made in \cite{Fernandez2014}. The energy method is used to determine the strength of the boundary enforcement and to derive an energy estimate for stability. Another candidate for the approximation of spatial derivatives is discontinuous Galerkin method, and it has been successfully used in \cite{Grote2006} for the wave equation. 

In the SBP--SAT framework, accuracy analysis has drawn less attention than stability. A commonly used measure for accuracy is convergence rate, typically in L$_2$ norm. The convergence rate indicates how fast the error in the numerical solution approaches zero as the mesh size $h$ goes to zero. The error in the numerical solution is caused by truncation error. The truncation error near a boundary is often larger than that in the interior of the computational domain, but with the large boundary truncation error localized at a fixed number of grid points that is independent of mesh refinement. As a consequence, its effect to the convergence rate may be weakened. It is well--known that by applying directly the energy method to the error equation, $1/2$ order is gained in convergence. However, this convergence result is in many cases suboptimal. When using a finite difference method with mesh size $h$ to solve a first order hyperbolic partial differential equation, it is in many cases possible to show that $(p+1)^{th}$ order convergence rate can be obtained if the boundary truncation error is $\mathcal{O}(h^p)$ \cite{Gustafsson1975,Gustafsson2008}. In other words, we gain one order in convergence rate. The technique used to analyze the effect of the boundary truncation error in \cite{Gustafsson1975} is normal mode analysis. The idea is based on Laplace transforming the error equation in time. 
The error equation due to boundary truncation error can be formulated as a system of linear equations, which is referred to as the boundary system. A gain of one order in convergence rate follows if the boundary system is nonsingular for all Re$(s)\geq 0$, where $s$ is the dual variable of time in Laplace space. For such cases we use the same terminology as in \cite{Gustafsson2013} and say that the boundary system satisfies the determinant condition. Also note that if a problem satisfies an energy estimate, the boundary system is nonsingular for Re$(s)>0$. However, an energy estimate does in general not imply that the determinant condition is satisfied.

Also for many other systems the boundary truncation errors have less severe effect than predicted by energy estimates. In \cite{Abarbanel2000} a parabolic problem is considered. Numerical experiments show a convergence rate $\min(2p,p+2)$, where $p$ is the order of the boundary truncation error, while a careful deviation of the energy estimate yields $\min(2p,p+3/2)$ as the convergence rate. This reaffirms that the energy estimate is an upper bound of the error, and indicates that the energy estimate is not always sharp. 

A more general result on convergence rate is presented in \cite{Svard2006}, where it is shown that $k$ orders can be recovered from boundary truncation error for a partial differential equation with $k^{th}$ spatial derivatives. Parabolic, incomplete parabolic and second order hyperbolic partial differential equations are considered, and for these equations we call $\min(2p,p+2)$ the optimal convergence rate. The condition for the optimal convergence is formulated in terms of a new stability concept, pointwise stability, but the underlying analysis of the optimal convergence is based on the determinant condition, and therefore the approach cannot be applied when the boundary system is singular at the origin.

When using SBP--SAT finite difference method to solve second order hyperbolic partial differential equations, there are many cases that the boundary system does not satisfy the determinant condition. In this paper, we aim at filling the gap in the study of convergence rate for such problems. We use the wave equation in second order form as our model problem. In particular, we consider problems with Dirichlet boundary condition, Neumann boundary condition and grid interface condition. We refer to them as the Dirichlet problem, the Neumann problem and the interface problem, respectively. By analyzing the error equation, we obtain a boundary system of linear equations. If the determinant condition is satisfied, we would get the optimal convergence rate as shown in \cite[Ch.12]{Gustafsson2013}. If the determinant condition is not satisfied on the imaginary axis, \cite{Gustafsson2013, Svard2006} are not applicable and one has to carefully derive an estimate for the solution of the boundary system to see how much is gained in convergence rate. 

In this paper, we find that 1) an energy estimate for the second order wave equation does not imply an optimal convergence rate; 2) the determinant condition is not necessary for an optimal convergence rate; 3) if there is an energy estimate but the determinant condition is not satisfied, there can be an optimal gain of order 2, or a non--optimal gain of order 1, or only a 1/2 order gain that is given by the energy estimate. We demonstrate how to analyze the boundary system to determine the accuracy gain. 

More specifically, for the Dirichlet problem, the boundary system is singular or nonsingular depending on the value of the penalty parameter in SAT. In this case, the optimal convergence rate is only obtained when the determinant condition is satisfied. 
For the Neumann and the interface problems, the determinant condition is not satisfied even though the schemes are stable. We will analyze the boundary system, and show how much is gained in convergence in those cases.
The analysis agrees well with numerical experiments. In \cite{Nissen2012,Nissen2013}, the same technique is also used to prove optimal convergence rate for Schr\"{o}dinger equation.

The structure of this paper is as follows. We start in \S\ref{1dhalfline} with the SBP--SAT method applied to the one dimensional wave equation with Dirichlet boundary condition. We apply the energy method and normal mode analysis, and derive results that correspond to the second, fourth and sixth order schemes. We then discuss the Neumann problem in \S\ref{section_Neumann}. In \S\ref{section_interface}, we consider one dimensional wave equation on a grid with a grid interface. We extend the analysis to two dimensions in \S\ref{2danalysis}. In \S\ref{numericalexperiments}, we perform numerical experiments. The computational convergence results support the accuracy analysis. The conclusions are drawn in \S\ref{conclusion}.

\section{One dimensional wave equation with Dirichlet boundary condition}\label{1dhalfline}
To describe the properties of the equation and the numerical methods, we need the following definitions. Let $w_1(x)$ and $w_2(x)$ be real--valued functions in L$_2[a_1,a_2]$. The inner product is defined as $(w_1,w_2)=\int_{a_1}^{a_2} w_1w_2 dx$. The corresponding norm is $\|w_1\|^2=(w_1,w_1)$.

The second order wave equation in one dimension takes the form 
\begin{equation}\label{1dwave1}
\begin{split}
&U_{tt} = U_{xx} + F,\ x\in [0,\infty),\ t\geq 0,\\
&U(x,0)=f^1(x),\ U_t(x,0)=f^2(x). 
\end{split}
\end{equation}
where $F$ is a forcing term.
We consider the half line problem with Dirichlet boundary condition:
\begin{equation}\label{1dwave2}
U(0,t)=g(t).
\end{equation}
The forcing function $F$, the initial data $f^1$, $f^2$ and the boundary data $g$ are compatible smooth functions with compact support. For the problem (\ref{1dwave1}) in a bounded domain, there is one boundary condition at each boundary. For the half line problem in consideration, the right boundary condition is substituted by requiring that the L$_2$ norm of the solution is bounded, i.e. $\|U(\cdot,t)\| < \infty$. The problem (\ref{1dwave1})--(\ref{1dwave2}) is well--posed if there is an energy estimate with $g(t)=0$ and the problem is boundary stable. In \cite[Definition 2.3]{Kreiss2012}, it is defined that the problem ($\ref{1dwave1}$)--($\ref{1dwave2}$) is boundary stable if with $f^1=f^2=0$ and $F=0$ there are constants $\eta_0\geq 0$, $\bar K>0$ and $\alpha>0$ independent of $g$ such that for all $\bar\eta>\eta_0$, $T\geq 0$,
\begin{equation*}
\int_0^T e^{-2\bar\eta t} |u(0,t)|^2 dt \leq \frac{\bar K}{\bar\eta^\alpha} \int_0^T e^{-2\bar\eta t} |g(t)|^2 dt.
\end{equation*}
It is obvious that ($\ref{1dwave1}$) is boundary stable with Dirichlet boundary condition.
To derive an energy estimate, we multiply Equation (\ref{1dwave1}) by $U_t$ and integrate by parts. With homogeneous Dirichlet boundary condition $g(t)=0$, we obtain
\begin{equation*}
\frac{d}{dt}\sqrt{E}\leq \|F\|,
\end{equation*}
where $E=\|U_t\|^2+\|U_x\|^2$ is the continuous energy. The energy estimate follows from Gronwall's lemma
\begin{equation}\label{1denergy}
\sqrt{E}\leq\sqrt{\|f^1_x\|^2+\|f^2\|^2}+\int_0^t\|F(\cdot,z)\|dz.
\end{equation}
Therefore, problem (\ref{1dwave1})--(\ref{1dwave2}) is well--posed. 

Next, we introduce the equidistant grid $x_i=ih,\ i=0,1,2,\cdots,$ and a grid function $u_i(t)\approx U(x_i,t)$. Furthermore, let $u(t)=[u_0(t),u_1(t),\cdots]^T$. We also define an inner product and norm for the grid functions $a$ and $b$ in $\mathcal{R}$ as $(a,b)_H=a^THb$ and $\|a\|_H^2=a^THa$, respectively, where $a^T$ denotes the transpose of $a$ and $H$ is a positive definite operator in the space of grid functions. An SBP operator approximates a derivative, and mimics the property of the continuous integration-by-parts via the inner product and norm defined above. 

In this paper, we use the diagonal norm SBP operator $D$ approximating second order derivative, i.e. $D\approx\frac{\partial^2}{\partial x^2}$. The order of accuracy is denoted by $2p,\ p=1,2,3,4,5$. For a $2p^{th}$ order diagonal norm SBP operator \cite{Mattsson2013,Mattsson2004}, the error for the approximation of the spatial derivative is $\mathcal{O}(h^{2p})$ in the interior, and $\mathcal{O}(h^{p})$ near the boundary. The SBP operator can be written as $D=H^{-1}(-A+BS)$, where $H$ is diagonal and positive definite, $A$ is symmetric positive semi-definite and $B=diag(-1,0,0,\cdots)$. $S$ is a one sided approximation of the first derivative at the boundary with the order of accuracy $p+1$. The elements in $D$ are proportional to $1/{h^2}$ while the elements in $H^{-1}$ and $S$ are proportional to $1/h$. Another version of the SBP operators are constructed with block norms, see \cite{Mattsson2013,Mattsson2004}. 

We use the SAT technique to impose the Dirichlet boundary condition weakly. The semi-discretized equation corresponding to (\ref{1dwave1}) and (\ref{1dwave2}) with homogeneous boundary data reads:
\begin{equation}\label{semidiscrete1dhomo}
u_{tt}=Du-H^{-1}S^TE_0 u-\frac{\tau}{h}H^{-1}E_0 u+F_g,
\end{equation}
where $e_0=[1,0,0,\cdots]^T$, $E_0=e_0e_0^T$ and $F_g$ is the grid function corresponding to $F(x,t)$. On the right hand side, the first term is an approximation of $U_{xx}$, while the second and third terms impose weakly the boundary condition $U(0,t)=0$. They act as penalty terms dragging the numerical solution at the boundary towards zero so that the boundary condition is simultaneously approximated. In general, the boundary condition is not satisfied exactly. The penalty parameter $\tau$ is to be determined so that an energy estimate of the discrete solution exists, which ensures stability. 

\subsection{Stability}
In \cite{Appelo2007,Mattsson2008,Mattsson2009}, it is shown that the operator $A$ can be written as
\begin{equation}\label{borrow}
A=h\alpha_{2p}(E_0S)^TE_0S+\tilde{A},
\end{equation}
where $\alpha_{2p}>0$ is as large as possible and $\tilde{A}$ is symmetric positive semi-definite. This is often called the borrowing trick. We use the technique in \cite{Mattsson2008} to compute the values of $\alpha_{2p}$ and list the results in Table \ref{alphavalue}. 
\begin{table}
\centering
\begin{tabular}{c c  c  c c c}
\toprule
$2p$ & 2 & 4 & 6 & 8 & 10\\ \midrule
$\alpha_{2p}$ &  0.4 & 0.2508560249 & 0.1878715026 & 0.0015782259 & 0.0351202265\\ \bottomrule
\end{tabular}
\caption{$\alpha_{2p}$ values}
\label{alphavalue}
\end{table}

Multiplying Equation (\ref{semidiscrete1dhomo}) by $u_t^TH$ from the left, with (\ref{borrow}) we obtain 
\begin{equation*}
\frac{d}{dt}\left(\underbrace{ \|u_t\|_H^2+\|u\|_{\tilde{A}}^2+\left(\sqrt{h\alpha_{2p}} (BSu)_0-\frac{1}{\sqrt{h\alpha_{2p}}}u_0\right)^2+(\tau-\frac{1}{\alpha_{2p}})\frac{u_0^2}{h} }_{E_h}\right)=2u_tHF_g.
\end{equation*}
For $\tau\geq\frac{1}{\alpha_{2p}}$, we have  $E_h\geq 0$.
In this case, $\vertiii{u}^2_h:=E_h$ is a discrete energy, and $\vertiii{\cdot}_h$ is the corresponding discrete energy norm. We then obtain the discrete energy estimate 
\begin{equation*}
\sqrt{E_h}\leq\sqrt{E_{h,0}}+\int_0^t\|F_g(\cdot,z)\|_Hdz,
\end{equation*}
where $E_{h,0}$ is the initial discrete energy. Clearly, stability is ensured if $\tau\geq\tau_{2p}:=\frac{1}{\alpha_{2p}}$.

\begin{remark}
It is possible to include the term $\|u\|_H^2$ in the discrete energy $E_h$. A similar energy estimate holds \cite{Gustafsson2013}. This is useful when comparing the numerical results with the theoretical analysis.
\end{remark}

\subsection{Accuracy analysis by the energy estimate}
Assuming that the continuous problem has a smooth solution $U(x,t)$, we can derive the error equation for the pointwise error $\xi_j=U(x_j,t)-u_j(t)$. The error equation corresponding to (\ref{semidiscrete1dhomo}) is 
\begin{equation}\label{erreqn1d}
\xi_{tt}=D\xi-H^{-1}S^TE_0\xi-\frac{\tau}{h}H^{-1}E_0\xi+T^{2p},
\end{equation}
where $\|\xi(t)\|_h<\infty$, $T^{2p}$ is the truncation error and $2p$ is the accuracy order of the SBP operator. The first $m$ components of $T^{2p}$ are of order $\mathcal{O}(h^p)$, and all the other components are of order $\mathcal{O}(h^{2p})$. For $2p=2,4,6,8,10$, the corresponding $m$ values are $1,4,6,8,11$. In the analysis, we only consider the leading term of the truncation error.
We introduce the interior truncation error $T^{2p,I}$, and the boundary truncation error $T^{2p,B}$ such that $T^{2p}=h^{2p}T^{2p,I}+h^pT^{2p,B}$, where $T^{2p,I}$ and $T^{2p,B}$ are independent of $h$, but depend on the derivatives of $U$. We have
\begin{equation*}
T^{2p,I}_i\begin{cases}
=0 & 0\leq i\leq m-1 \\
\neq 0 & i>m-1
\end{cases}\quad\text{and}\quad
T^{2p,B}_i\begin{cases}
=0 & i> m-1 \\
\neq 0 & 0\leq i\leq m-1
\end{cases}.
\end{equation*}
Since the number of grid points with the large truncation error $\mathcal{O}(h^p)$ is finite and independent of $h$, we have
\begin{equation*}
\|T^{2p}\|_h^2=h(h^{4p}\underbrace{\sum_{i=m}^{\infty}|T^{2p,I}_i|^2}_{~\mathcal{O}(h^{-1})}+h^{2p}\underbrace{\sum_{i=0}^{m-1}|T^{2p,B}_i|^2}_{\mathcal{O}(1)})\leq K_I h^{4p}+K_Bh^{2p+1}.
\end{equation*}
For $2p=2,4,6,8,10$ and small $h$, the first term is much smaller than the second one. Thus, we have $\|T^{2p}\|_h\leq\tilde{K}_B h^{p+1/2}$.
By applying energy method to the error equation (\ref{erreqn1d}), we obtain
\begin{equation*}
\vertiii{\xi}_h\leq C_eh^{p+1/2}.
\end{equation*}
This means that by the energy method we can only prove a gain in accuracy order of 1/2. Therefore, the convergence rate is at least $p+\frac{1}{2}$ if the numerical scheme is stable, that is if $\tau\geq\tau_{2p}$.

\subsection{Normal mode analysis for the boundary truncation error}
To derive a sharp estimate, we partition the pointwise error into two parts, the interior error $\epsilon^I$ and the boundary error $\epsilon$, such that $\xi=\epsilon^I+\epsilon$. $\epsilon^I$ is the error due to the interior truncation error, and $\epsilon$ is the error due to the boundary truncation error. $\epsilon^I$ can be estimated by the energy method, yielding 
\begin{equation}\label{interior_estimate_1d}
\vertiii{\epsilon^I}_h\leq C_I h^{2p}.
\end{equation} 
where $C_I$ is a constant independent of $h$.

We perform a normal mode analysis to analyze the boundary truncation error $\epsilon$ for second, fourth and sixth order SBP-SAT scheme. The boundary error equation is 
\begin{equation}\label{Berreqn1d}
\epsilon_{tt}=D\epsilon-H^{-1}S^TE_0\epsilon-\frac{\tau}{h}H^{-1}E_0\epsilon+h^pT^{2p,B},
\end{equation}
where $\|\epsilon\|_h<\infty$. In the analysis of $\epsilon$, we take the Laplace transform in time of (\ref{Berreqn1d}),
\begin{equation}\label{Berreqn1dLaplace}
s^2\hat{\epsilon}=D\hat{\epsilon}-H^{-1}S^TE_0\hat{\epsilon}-\frac{\tau}{h}H^{-1}E_0\hat{\epsilon}+h^p\hat{T}^{2p,B},
\end{equation}
for $\text{Re}(s)>0$, where $\hat{}$ denotes the variable in the Laplace space. Let $\tilde{s}=sh$. After multipling $h^2$ on both sides of (\ref{Berreqn1dLaplace}), we obtain
\begin{equation}\label{Berreqn1dLaplaceTilde}
\tilde{s}^2\hat{\epsilon}=h^2D\hat{\epsilon}-h^2H^{-1}S^TE_0\hat{\epsilon}-\tau hH^{-1}E_0\hat{\epsilon}+h^{p+2}\hat{T}^{2p,B}.
\end{equation}
Note that the first three terms in the right hand side of (\ref{Berreqn1dLaplaceTilde}) are $h$-independent. 

By assumption the true solution $U(x,t)$ is smooth and, consequently, $|\hat{T}^{2p,B}(s)|$ decreases fast for large $|s|$. Convergence rate is an asymptotic behaviour as the mesh size $h$ approaches zero. Therefore, we need to investigate the solution of (\ref{Berreqn1dLaplaceTilde}) as $\tilde s$ approaches zero. In the analysis, we assume $|s|\leq K$ and consider $\text{Re}(s)\geq\eta>0$, where $K$ and $\eta$ are some positive constants. Equivalently, $|\tilde s|\ll 1$ and $\text{Re}(\tilde s)\geq\eta h$. If the semi-discretized equation is stable, the Laplace transformed problem is nonsingular for Re$(\tilde s)>0$ \cite{Gustafsson2013}. 

There are essentially two steps in the normal mode analysis. Firstly, by a detailed analysis of the error equation (\ref{Berreqn1dLaplaceTilde}), we obtain an estimate for $\|\hat\epsilon\|_h$ in the Laplace space.
Then we use Parseval's relation to derive an estimate for the error in the physical space of the form
\begin{equation}\label{qestimate}
\sqrt{\int_0^{t_f}\|\epsilon(\cdot,t)\|^2_hdt}\leq Ch^q,
\end{equation}
where $C$ depends only on $\eta$, the final time $t_f$ and the derivatives of the true solution $U$. 
The results for the Dirichlet problem are summarized in Theorem \ref{1dresult}.
\begin{theorem}\label{1dresult}
For the second, fourth and sixth order stable SBP-SAT approximations of the second order wave equation (\ref{1dwave1}-\ref{1dwave2}) on a half line with Dirichlet boundary condition, the rates $q$ in (\ref{qestimate}) depend on $\tau$, and are listed in Table \ref{1d_table}.
\begin{table}
\centering
\begin{tabular}{c c c}
\toprule
$2p$ & $q({\tau>\tau_{2p}})$ & $q(\tau=\tau_{2p})$ \\ 
\midrule
2 & 2 & 1.5 \\
4 & 4 & 2.5 \\
6 & 5 & 3.5 \\
\bottomrule
\end{tabular}
\caption{Theoretical convergence result for one dimensional wave equation.}
\label{1d_table}
\end{table}
\end{theorem}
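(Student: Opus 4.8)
The plan is to prove both columns of Table~\ref{1d_table} by a normal mode analysis of the boundary error equation \eqref{Berreqn1dLaplaceTilde}, carried out separately for $2p=2,4,6$. The interior error $\epsilon^I$ is already controlled by \eqref{interior_estimate_1d}, so throughout I focus on the boundary error $\epsilon$ and combine the two at the end through $\xi=\epsilon^I+\epsilon$; this is what produces the reported rate $\min(2p,p+2)$ rather than a pure boundary gain. First I would solve the homogeneous interior recursion obtained by dropping the forcing and the boundary-modified rows of \eqref{Berreqn1dLaplaceTilde}: substituting $\hat\epsilon_i=\kappa^i$ gives a characteristic equation in $\kappa$ whose roots depend analytically on $\tilde s$. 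For $\mathrm{Re}(\tilde s)>0$ one keeps only the decaying roots $|\kappa_j(\tilde s)|<1$, writes $\hat\epsilon_i=\sum_j\sigma_j\kappa_j^i$ beyond the boundary closure, and expands $\kappa_j$ near $\tilde s=0$. For second order there is a single root with $\kappa=1-\tilde s+\tfrac{\tilde s^2}{2}+\cdots$, so $1-|\kappa|^2\approx 2\,\mathrm{Re}(\tilde s)$, the estimate that drives every geometric sum below.

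Next I would assemble the boundary system. The SAT term $H^{-1}S^TE_0$ is \emph{not} localized at $i=0$: because $S^{T}$ carries the stencil of $S$ in its first column, this term, together with the modified first rows of $D$, perturbs a fixed number of rows $i=0,\dots,m'$. Imposing those rows on the ansatz yields a square linear system $\mathcal M(\tilde s,\tau)\,\mathbf v=h^{p+2}\hat{T}^{2p,B}$, where $\mathbf v$ collects the few genuine boundary values $\hat\epsilon_0,\hat\epsilon_1,\dots$ together with the decaying amplitudes $\sigma_j$. The crucial computation is $\det\mathcal M(\tilde s,\tau)$ near $\tilde s=0$: using the characteristic relation to simplify, one finds $\det\mathcal M(0,\tau)=c\,(\tau-\tau_{2p})$, which vanishes \emph{exactly} at the stability threshold, so the determinant condition holds iff $\tau>\tau_{2p}$. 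For $2p=2$ this reduces to the scalar factor $\tilde s^{2}\big(1+\kappa-\tfrac{\kappa^{2}}{2}\big)+2\tau-5$, equal to $2\tau-5$ at $\tilde s=0$, i.e. zero precisely at $\tau_2=5/2$.

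For $\tau>\tau_{2p}$ the inverse $\mathcal M^{-1}$ is bounded as $\tilde s\to0$, so $\mathbf v=\mathcal O(h^{p+2})$; in particular $\sigma_j=\mathcal O(h^{p+2})$. I would then bound $\|\hat\epsilon\|_h^2$ by splitting it into the $\mathcal O(1)$ boundary points and the geometric tail $h\,|\sigma|^2\sum|\kappa|^{2i}\sim |\sigma|^2/(2\,\mathrm{Re}(s))$, take the square root, and pass to physical space by Parseval on the line $\mathrm{Re}(s)=\eta$. Since $U$ is smooth, $\hat T^{2p,B}(s)$ decays fast and the $\omega$-integral converges, giving $\int_0^{t_f}\|\epsilon\|_h^2\,dt\le C h^{2(p+2)}$. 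Combined with \eqref{interior_estimate_1d} this yields the total rate $\min(2p,p+2)$, the first column.

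The delicate case is $\tau=\tau_{2p}$, where $\det\mathcal M(0)=0$ and a naive bound $\mathbf v\sim h^{p+2}/\det$ would suggest a catastrophic loss. The key observation, which I would establish by explicitly solving the reduced system, is that the singularity sits only in the genuine boundary values while the decaying amplitudes stay bounded: for second order $\sigma=\tfrac{\tilde s^{2}}{2}\hat\epsilon_0$ with $\det\mathcal M\sim\tfrac32\tilde s^{2}$, so $\sigma=\mathcal O(h^{p+2})$ even though $\hat\epsilon_0,\hat\epsilon_1\sim h^{p+2}/\tilde s^{2}=\mathcal O(h^{p}/|s|^2)$. Consequently the geometric tail is negligible and $\|\hat\epsilon\|_h^2$ is dominated by the finitely many boundary points, $\|\hat\epsilon\|_h^2\approx h\sum_{\text{bndry}}|\hat\epsilon_i|^2\sim h^{2p+1}/|s|^4$. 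The single uncancelled factor of $h$ from the norm weight is exactly what turns $h^{2p+2}$ into the odd power $h^{2p+1}$; Parseval (fixed $\eta$, decaying $\hat T$) then gives $\int_0^{t_f}\|\epsilon\|_h^2\,dt\le C h^{2p+1}$, i.e. $p+\tfrac12$, matching the energy estimate. Verifying that this same cancellation mechanism persists for the wider fourth- and sixth-order closures, so that the loss always appears through $\mathcal O(1)$ boundary points and never through the decaying tail, is the main obstacle, since it requires tracking the full boundary system rather than a single scalar determinant.
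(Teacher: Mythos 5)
Your proposal is correct in substance and, for the column $\tau>\tau_{2p}$, follows essentially the same route as the paper: the interior recursion and its admissible roots $|\kappa_j|<1$ with $1-|\kappa_1|^2\geq 2\,\mathrm{Re}(\tilde s)+\mathcal O(|\tilde s|^2)$ (Lemma \ref{kappa1}), the boundary system $C(\tilde s,\tau)\Sigma=h^{p+2}\hat T^{2p,B}$ over the finitely many rows touched by the SAT terms, the fact that $\det C(0,\tau)$ is affine in $\tau$ and vanishes exactly at $\tau_{2p}$ (for $2p=2$, $\det C_{2D}(0,\tau)=5-2\tau$), boundedness of $C^{-1}$ near $\tilde s=0$ when $\tau>\tau_{2p}$, the geometric tail $h|\sigma|^2/(1-|\kappa_1|^2)\leq|\sigma|^2/(2\eta)$, and Parseval restricted to $[0,t_f]$; combining the resulting $\mathcal O(h^{p+2})$ boundary error with the $\mathcal O(h^{2p})$ interior estimate (\ref{interior_estimate_1d}) gives $\min(2p,p+2)=2,4,5$. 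Where you genuinely depart from the paper is the column $\tau=\tau_{2p}$: the paper simply notes that the boundary system is singular at $\tilde s=0$, abandons the normal-mode argument there, and quotes the energy estimate $\vertiii{\xi}_h\leq C_e h^{p+1/2}$; you instead propose to resolve the singular system directly and show that the $1/\tilde s^{2}$ blow-up lands only on the finitely many boundary values while the decaying amplitudes stay $\mathcal O(h^{p+2})$. For $2p=2$ your claims check out: one finds $\det C_{2D}(\tilde s,\tau_2)=-\tfrac32\tilde s^2+\mathcal O(\tilde s^3)$, and Cramer's rule gives $\sigma=\mathcal O(h^3)$ while $\hat\epsilon_0,\hat\epsilon_1=\mathcal O(h^3/\tilde s^2)$, so $h\sum_i|\hat\epsilon_i|^2=\mathcal O(h^{2p+1})$ and the rate $p+\tfrac12$ follows. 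This buys a sharper localization of the error at the boundary points (consistent with the $\mathcal O(h^p)$ maximum-norm rates observed in \S\ref{numericalexperiments}), but it is riskier: if the cancellation failed and some $\sigma_j$ grew like $h^{p+2}/\tilde s$, the geometric tail alone would give only $\mathcal O(h^{p})$, \emph{worse} than the energy bound. Since you have not verified the cancellation for the wider fourth- and sixth-order closures (you name this as the main obstacle), you should close that branch the way the paper does, by falling back on the already-established energy estimate applied to the error equation (\ref{erreqn1d}), which yields $q=p+\tfrac12$ for all three orders with no further computation; with that one substitution your argument is complete.
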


In \S\ref{numericalexperiments}, we present results from numerical experiments, which agree well with the theoretical results in Table \ref{1d_table}.
After some preliminaries we will prove Theorem \ref{1dresult} for $2p=2,4$ and 6 in \S\ref{1d2o}, \ref{1d4o} and \ref{1d6o}, respectively. In the proof, we will show that the determinant condition is satisfied if $\tau>\tau_{2p}$ but not satisfied if $\tau=\tau_{2p}$. In addition, the determinant condition is necessary for the optimal convergence rate. 

To begin with, we note that sufficiently far away from the boundary, the two penalty terms and the boundary truncation error in (\ref{Berreqn1dLaplaceTilde}) are not present. The coefficients in $D$ correspond to that in the standard central finite difference scheme. We have
\begin{subequations}\label{interior_equation}
\begin{align}
&2p=2:\quad s^2\hat{\epsilon}_j=D_+D_-\hat{\epsilon}_j,\ j=3,4,5,\cdots \label{interior_equation1}, \\
&2p=4:\quad s^2\hat{\epsilon}_j=(D_+D_--\frac{1}{12}(D_+D_-)^2)\hat{\epsilon}_j,\ j=4,5,6,\cdots \label{interior_equation2}, \\
&2p=6:\quad s^2\hat{\epsilon}_j=(D_+D_--\frac{h^2}{12}(D_+D_-)^2+\frac{h^4}{90}(D_+D_-)^3)\hat{\epsilon}_j,\ j=5,6,7,\cdots \label{interior_equation3},
\end{align}
\end{subequations}
where 
\begin{equation*}
D_+\hat{\epsilon}_j=\frac{\hat{\epsilon}_{j+1}-\hat{\epsilon}_j}{h}\quad \text{and}\quad D_-\hat{\epsilon}_j=\frac{\hat{\epsilon}_j-\hat{\epsilon}_{j-1}}{h}.
\end{equation*}
The corresponding characteristic equations are
\begin{subequations}\label{characteristic_equation}
\begin{align}
&2p=2:\quad \kappa^2-(2+\tilde{s}^2)\kappa+1=0\label{characteristic_equation1}, \\
&2p=4:\quad \kappa^4-16\kappa^3+(30+12\tilde{s}^2)\kappa^2-16\kappa+1=0\label{characteristic_equation2}, \\
&2p=6:\quad 2\kappa^6-27\kappa^5+270\kappa^4-(180\tilde{s}^2+490)\kappa^3+270\kappa^2-27\kappa+2=0.\label{characteristic_equation3}
\end{align}
\end{subequations}
It is easy to verify by von Neumann analysis that the interior numerical scheme is stable when applied to the corresponding periodic problem. From Lemma 12.1.3 in \cite[pp.~379]{Gustafsson2013}, it is straightforward to prove that there is no root with $|\kappa|=1$ for $\text{Re}(s)>0$. We will need the following specifics for the roots.
\begin{lemma}\label{lemma_characteristic_solution}
For $2p=2,4,6$, the number of admissible roots of (\ref{characteristic_equation}) satisfying $|\kappa|<1$ for $\text{Re}(\tilde{s})>0$ is 1,2,3, respectively. In the vicinity of $\tilde{s}=0$, they are given by
\begin{subequations}\label{characteristic_solution}
\begin{align}
2p=2:\quad &\kappa_1=1-\tilde{s}+\mathcal{O}(\tilde{s}^2), \label{characteristic_solution1}\\
2p=4:\quad &\kappa_1=1-\tilde{s}+\mathcal{O}(\tilde{s}^2), \ \kappa_2=7-4\sqrt{3}+\mathcal{O}(\tilde{s}^2), \label{characteristic_solution2}\\
2p=6:\quad &\kappa_1=1-\tilde{s}+\mathcal{O}(\tilde{s}^2),\ \kappa_2=0.0519-0.0801i+\mathcal{O}(\tilde{s}^2),\label{characteristic_solution3} \\
 &\kappa_3=0.0519+0.0801i+\mathcal{O}(\tilde{s}^2).  \nonumber
\end{align}
\end{subequations}
\end{lemma}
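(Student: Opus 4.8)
The plan is to exploit the reciprocal (palindromic) structure of the three characteristic polynomials. Each interior operator in (\ref{interior_equation}) is a polynomial in $D_+D_-$, and the symbol of $D_+D_-$ acting on $\kappa^j$ is $h^{-2}(\kappa-2+\kappa^{-1})$, which is invariant under $\kappa\leftrightarrow\kappa^{-1}$. Consequently each equation in (\ref{characteristic_equation}) is unchanged when $\kappa$ is replaced by $\kappa^{-1}$, so its roots occur in reciprocal pairs $(\kappa,\kappa^{-1})$. Since the total degree is $2p$, there are exactly $p$ such pairs, counting multiplicity.

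For the count of admissible roots, I would combine this pairing with the fact, already noted above via Lemma 12.1.3 of \cite{Gustafsson2013}, that no root satisfies $|\kappa|=1$ when $\text{Re}(s)>0$. For such $s$ each reciprocal pair then has exactly one member inside and one outside the unit disk, so precisely $p$ roots satisfy $|\kappa|<1$; this yields $1,2,3$ for $2p=2,4,6$, respectively.

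To obtain the expansions near $\tilde s=0$, I would first locate the roots at $\tilde s=0$ using the reciprocal substitution $y=\kappa+\kappa^{-1}$, which turns each degree-$2p$ equation into a degree-$p$ polynomial in $y$; one then recovers $\kappa$ from $\kappa^2-y\kappa+1=0$. In every case $y=2$, equivalently the double root $\kappa=1$, appears, reflecting consistency of the interior scheme at $s=0$, while the remaining $y$-roots give the simple roots $\kappa_2=7-4\sqrt3$ for $2p=4$ and the conjugate pair $\approx 0.0519\pm0.0801i$ for $2p=6$, all lying strictly inside the disk. Because each equation depends on $\tilde s$ only through $\tilde s^2$, the implicit function theorem shows that every \emph{simple} root is analytic in $\tilde s^2$, hence of the form $\kappa_j(0)+\mathcal{O}(\tilde s^2)$, which gives the stated expansions for $\kappa_2,\kappa_3$.

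The delicate point, and the main obstacle, is the double root $\kappa=1$ at $\tilde s=0$, which is the only root sitting on the unit circle and which splits as $\tilde s$ moves into the right half-plane. Writing $\kappa=1+\delta$ and substituting into (\ref{characteristic_equation}), the lowest-order balance reads $\delta^2=\tilde s^2+\cdots$ (for $2p=2$ one obtains exactly $\delta^2-\tilde s^2\delta-\tilde s^2=0$), so the two branches are $\delta=\pm\tilde s+\mathcal{O}(\tilde s^2)$. Selecting the admissible branch requires checking which sign gives $|\kappa|<1$ for $\text{Re}(\tilde s)>0$: since $|1-\tilde s|^2=1-2\text{Re}(\tilde s)+|\tilde s|^2<1$ for small $\tilde s$, the branch $\kappa_1=1-\tilde s+\mathcal{O}(\tilde s^2)$ is the interior one, while $1+\tilde s$ is exterior. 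Together with the simple interior roots, this confirms both the count and the expansions in (\ref{characteristic_solution}); I would carry out the analogous $\delta$-expansion for $2p=4,6$ to verify the same leading coefficient, the higher-order corrections being routine.
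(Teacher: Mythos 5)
Your proof is correct, and it takes a genuinely different route from the paper. The paper proceeds case by case: for $2p=2$ and $2p=4$ it writes down the roots explicitly in radicals and Taylor expands them, and for $2p=6$ it resorts to numerical root finding at $\tilde s=0$ followed by perturbation theory. You instead exploit the palindromic structure inherited from the symbol of $D_+D_-$, which buys three things the paper does not get: (i) the count of admissible roots ($p$ inside the unit disk) follows at once from the reciprocal pairing plus the already-established fact that no root lies on $|\kappa|=1$ for $\operatorname{Re}(s)>0$, with no need to exhibit the roots; (ii) the substitution $y=\kappa+\kappa^{-1}$ reduces each case to a degree-$p$ polynomial in $y$ from which $y=2$ factors out, so even the sextic case is handled in closed form (the remaining quadratic $2y^2-23y+218=0$ reproduces the conjugate pair $\approx 0.0519\pm 0.0801i$); and (iii) since the polynomials depend on $\tilde s$ only through $\tilde s^2$, the $\mathcal{O}(\tilde s^2)$ corrections for the simple roots come for free from the implicit function theorem, rather than from inspecting explicit radical expressions. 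Your treatment of the degenerate double root $\kappa=1$ — the $\delta$-expansion giving $\delta=\pm\tilde s+\mathcal{O}(\tilde s^2)$ and the branch selection via $|1-\tilde s|^2=1-2\operatorname{Re}(\tilde s)+|\tilde s|^2<1$ — matches what the paper does implicitly and is consistent with its Lemma \ref{kappa1}. The one piece you defer ("verify the same leading coefficient for $2p=4,6$") does check out: writing $y=2+w$ one finds $w=\tilde s^2+\mathcal{O}(\tilde s^4)$ in each case, which is exactly second-order consistency of the interior stencil, so the deferral is harmless.
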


\begin{proof}
$2p=2$: Equation (\ref{characteristic_equation1}) has two roots:
$\kappa_{1,2}=1+\frac{1}{2}\tilde{s}^2\pm\frac{1}{2}\sqrt{\tilde{s}^4+4\tilde{s}^2}$. We find by Taylor expansion at $\tilde{s}=0$ that $\kappa_1=1-\tilde{s}+\mathcal{O}(\tilde{s}^2)$ and $\kappa_2=1+\tilde{s}+\mathcal{O}(\tilde{s}^2)$. Thus, the admissible root is $\kappa_1=1-\tilde{s}+\mathcal{O}(\tilde{s}^2)$. 

$2p=4$: Equation (\ref{characteristic_equation2}) has four roots:
\begin{equation*}
\begin{split}
\kappa_1&=\sqrt{24-3\tilde{s}-8\sqrt{9-3\tilde{s}^2}}-\sqrt{9-3\tilde{s}^2}+4,\\
\kappa_2&=4-\sqrt{9-3\tilde{s}^2}-\sqrt{24-3\tilde{s}-8\sqrt{9-3\tilde{s}^2}},  \\
\kappa_3&=\sqrt{9-3\tilde{s}^2}-\sqrt{8\sqrt{9-3\tilde{s}^2}-3\tilde{s}+24}+4,\\
\kappa_4&=\sqrt{8\sqrt{9-3\tilde{s}^2}-3\tilde{s}+24}+\sqrt{9-3\tilde{s}^2}+4. 
\end{split}
\end{equation*}
We find by Taylor expansion at $\tilde{s}=0$ that
\begin{equation*}
\begin{split}
\kappa_1&=1-\tilde{s}+\mathcal{O}(\tilde{s}^2),\quad\quad\quad \kappa_2=7-4\sqrt{3}+\mathcal{O}(\tilde{s}^2), \\
\kappa_3&=7+4\sqrt{3}+\mathcal{O}(\tilde{s}^2),\quad\ \ \kappa_4=1+\tilde{s}+\mathcal{O}(\tilde{s}^2). 
\end{split}
\end{equation*}
Thus, the admissible roots are $\kappa_1=1-\tilde{s}+\mathcal{O}(\tilde{s}^2)$ and $\kappa_2=7-4\sqrt{3}+\mathcal{O}(\tilde{s}^2)$. 

$2p=6$: (\ref{characteristic_equation3}) is a six order equation. There is no formula for general six order equations over the rationals in terms of radicals. We solve numerically equation (\ref{characteristic_equation3}) with $\tilde{s}=0$, and then analyze the six roots by perturbation theory to find out their dependence on $\tilde{s}$. The three admissible roots are given by (\ref{characteristic_solution3}).
\end{proof}

\begin{lemma}\label{kappa1}
Consider $|\tilde{s}|\ll 1 $ and $\text{Re}(\tilde{s})\geq\eta h>0$. For $2p=2,4,6$ the admissible roots $\kappa_1(\tilde{s})$ satisfy
\begin{equation*}
1-|\kappa_1(\tilde{s})|^2\geq 2\text{Re}(\tilde{s})+\mathcal{O}(|\tilde{s}|^2).
\end{equation*}
\end{lemma}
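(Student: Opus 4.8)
The plan is to read the claim directly off the small-$\tilde{s}$ expansion of the admissible root that is already established in Lemma \ref{lemma_characteristic_solution}. The crucial observation is that for all three orders $2p=2,4,6$ the admissible root $\kappa_1$ shares the \emph{same} leading behaviour, $\kappa_1(\tilde{s}) = 1-\tilde{s}+\mathcal{O}(\tilde{s}^2)$, so a single computation handles the three cases simultaneously, and the right-hand side $2\,\text{Re}(\tilde{s})$ of the asserted inequality is exactly the leading term that this expansion produces.

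First I would write $\kappa_1(\tilde{s}) = 1-\tilde{s}+g(\tilde{s})$ with $g(\tilde{s})=\mathcal{O}(\tilde{s}^2)$, meaning $|g(\tilde{s})|\leq C|\tilde{s}|^2$ for $|\tilde{s}|\ll 1$, and form $|\kappa_1|^2 = \kappa_1\overline{\kappa_1} = (1-\tilde{s}+g)(1-\overline{\tilde{s}}+\overline{g})$. Expanding this product, the only contributions linear in $\tilde{s}$ are $-\tilde{s}$ and $-\overline{\tilde{s}}$; since $\tilde{s}+\overline{\tilde{s}} = 2\,\text{Re}(\tilde{s})$, these combine into $-2\,\text{Re}(\tilde{s})$. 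Every remaining term is quadratically small: $|\tilde{s}|^2$, the cross terms $-\tilde{s}\,\overline{g}$, $g$, $\overline{g}$, $-g\,\overline{\tilde{s}}$, and $|g|^2$ are all bounded in modulus by a constant times $|\tilde{s}|^2$ once $|\tilde{s}|\ll 1$. Collecting terms gives $|\kappa_1(\tilde{s})|^2 = 1-2\,\text{Re}(\tilde{s})+\mathcal{O}(|\tilde{s}|^2)$, hence $1-|\kappa_1(\tilde{s})|^2 = 2\,\text{Re}(\tilde{s})+\mathcal{O}(|\tilde{s}|^2)$. Interpreting the remainder as a quantity bounded below by $-C|\tilde{s}|^2$ yields the stated inequality. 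The hypothesis $\text{Re}(\tilde{s})\geq\eta h>0$ is used only to keep $\tilde{s}$ away from the origin, where $\kappa_1=1$ sits on the unit circle; for $\text{Re}(\tilde{s})>0$ the root lies strictly inside the disk, so the expansion genuinely represents the admissible root with $|\kappa_1|<1$.

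There is essentially no analytic obstacle here; the entire content sits in the expansions of Lemma \ref{lemma_characteristic_solution}, and the conjugate-product computation is routine. The one point that requires a little care is uniformity: the constant $C$ controlling $g(\tilde{s})=\mathcal{O}(\tilde{s}^2)$, and therefore the constant hidden in the final $\mathcal{O}(|\tilde{s}|^2)$, must be taken as the maximum over the three schemes, which is legitimate because there are only finitely many cases. If a fully explicit remainder were desired one could instead carry the second-order coefficient of $\kappa_1$; for example, for $2p=2$ one finds $\kappa_1 = 1-\tilde{s}+\tfrac{1}{2}\tilde{s}^2+\mathcal{O}(\tilde{s}^3)$, whence $1-|\kappa_1|^2 = 2\,\text{Re}(\tilde{s})-2(\text{Re}\,\tilde{s})^2+\mathcal{O}(|\tilde{s}|^3)$, confirming that the quadratic correction is indeed negative and thus consistent with the inequality sign. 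This level of detail is unnecessary, however, since only the sign and size of the leading term are needed in the subsequent normal mode estimate.
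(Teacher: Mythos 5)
Your proof is correct and is essentially the paper's own argument: both start from the expansion $\kappa_1=1-\tilde{s}+\mathcal{O}(\tilde{s}^2)$ of Lemma \ref{lemma_characteristic_solution}, expand $|\kappa_1|^2$, identify $-2\,\text{Re}(\tilde{s})$ as the only linear contribution, and absorb everything else into $\mathcal{O}(|\tilde{s}|^2)$ (the paper writes $\tilde{s}=x+iy$ and computes $1-|1-x-iy|^2=2x-x^2-y^2$, which is the same computation in coordinates). Your extra remark tracking the $\tfrac{1}{2}\tilde{s}^2$ coefficient for $2p=2$ is a harmless refinement not present in, and not needed for, the paper's proof.
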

\begin{proof}
Let $\tilde{s}=x+iy$ where $x,y$ are real numbers. Then $|x|\geq\eta h$.
\begin{equation*}
\begin{split}
1-|\kappa_1(\tilde{s})|^2 &= 1-|1-\tilde{s}+\mathcal{O}(\tilde{s}^2)|^2 \\
&\geq 1-|1-\tilde{s}|^2+\mathcal{O}(|\tilde{s}|^2) \\
&= 1-|1-x-yi|^2+\mathcal{O}(|\tilde{s}|^2) \\
&= 2x-x^2-y^2+\mathcal{O}(|\tilde{s}|^2) \\
&= 2\text{Re}(\tilde{s})+\mathcal{O}(|\tilde{s}|^2).
\end{split}
\end{equation*}
\end{proof}
By Lemma \ref{kappa1}, we obtain $\frac{1}{1-|\kappa (\tilde s)|^2} \leq \frac{1}{2\eta h}$ to the leading order. We will use this inequality to estimate $\|\hat\epsilon\|_h$.

\subsubsection{Proof of Theorem \ref{1dresult} for the second order scheme}\label{1d2o}
Away from the boundary, equation (\ref{Berreqn1dLaplaceTilde}) is simplified to (\ref{interior_equation1}), which is a recursion relation for $\hat{\epsilon}_j$. A general solution of (\ref{interior_equation1}) can be written as
\begin{equation}\label{1dsbp2i}
\hat{\epsilon}_j=\sigma\kappa_1^{j-2},\ j=2,3,4,\cdots,
\end{equation}
where $\kappa_1$ is obtained from (\ref{characteristic_solution1}) and $\sigma$ is an unknown in the boundary system. In the following, we derive the boundary system and investigate its solution near $\tilde s=0$ to obtain an estimate of the error in the Laplace space. 
The first three rows of (\ref{Berreqn1dLaplaceTilde}) are affected by the penalty terms. They are
\begin{equation}\label{2ndr3}
\begin{split}
\tilde{s}^2\hat{\epsilon}_0&=\hat{\epsilon}_0-2\hat{\epsilon}_1+\hat{\epsilon}_2+3\hat{\epsilon}_0-2\tau\hat{\epsilon}_0+h^3\hat{T}^{2,B}_0, \\
\tilde{s}^2\hat{\epsilon}_1&=\hat{\epsilon}_0-2\hat{\epsilon}_1+\hat{\epsilon}_2-2\hat{\epsilon}_0, \\
\tilde{s}^2\hat{\epsilon}_2&=\hat{\epsilon}_1-2\hat{\epsilon}_2+\hat{\epsilon}_3-\frac{1}{2}\hat{\epsilon}_0. 
\end{split}
\end{equation}
By Taylor expansion, it is straightforward to derive $\hat{T}^{2,B}_0=-\hat{U}_{xxx}(0,s)$ to the leading order.
We write (\ref{2ndr3}) in the matrix vector multiplication form with the help of (\ref{1dsbp2i}), and obtain the boundary system
\begin{equation*}
\underbrace{\begin{bmatrix}
-1 & \tilde{s}^2-4+2\tau & 2 \\
-1 & 1 & \tilde{s}^2+2 \\
\kappa_1-2-\tilde{s}^2 & \frac{1}{2} & 1
\end{bmatrix}}_{C_{2D}(\tilde{s},\tau)}
\underbrace{\begin{bmatrix}
\sigma \\
\hat{\epsilon}_0 \\
\hat{\epsilon}_1
\end{bmatrix}}_{\Sigma_{2D}}=h^3\underbrace{\begin{bmatrix}
-\hat{U}_{xxx}(0,s) \\
0 \\
0
\end{bmatrix}}_{\hat{T}^{2,B}_v}.
\end{equation*}
Next, we investigate the invertibility of $C_{2D}(\tilde{s},\tau)$ in a vicinity of $\tilde{s}=0$ to derive an estimate for $|\Sigma_{2D}|$, where $|\cdot|$ denotse the standard Euclidean norm of vectors and matrices. We note that
\begin{equation*}
C_{2D}(0,\tau)=\begin{bmatrix}
-1 & -4+2\tau & 2\\
-1 & 1 & 2 \\
-1 & \frac{1}{2} & 1
\end{bmatrix}.
\end{equation*}
The determinant of $C_{2D}(0,\tau)$ is given by $\text{det}(C_{2D}(0,\tau))=5-2\tau$. Clearly, $C_{2D}(0,\tau)$ is singular if $\tau=2.5$ and nonsingular otherwise. The stability condition is $\tau\geq\tau_2=2.5$.

In the case $\tau>\tau_2$, $C_{2D}(0,\tau)$ is nonsingular. By the continuity of $C_{2D}(\tilde{s},\tau)$, for any $\gamma>1$ and sufficiently small $h$ we have $|C_{2D}^{-1}(\tilde{s},\tau)| \leq \gamma |C_{2D}^{-1}(0,\tau)|$.
Thus, $|\Sigma_{2D}|\leq \gamma |C_{2D}^{-1}(0,\tau)| |\hat{T}^{2,B}_v| h^3$. Therefore, we have $|\sigma|^2 \leq K_\sigma |\hat{U}_{xxx}(0,s)|^2 h^6$ and $\sum_{i=0}^1|\hat\epsilon_i|^2\leq K_\epsilon  |\hat{U}_{xxx}(0,s)|^2 h^6$, where $K_\sigma$ and $K_\epsilon$ are constants independent of $h$.
In $\text{L}_2$ norm, we have
\begin{equation*}
\|\hat\epsilon\|^2_h=h\sum_{i=0}^1|\hat\epsilon_i|^2+h|\sigma|^2\sum_{i=0}^\infty|\kappa_1|^{2i}=\underbrace{h\sum_{i=0}^1|\hat\epsilon_i|^2}_{\|\hat\epsilon\|^2_{B,h}}+\underbrace{h|\sigma|^2\frac{1}{1-|\kappa_1|^2}}_{\|\hat\epsilon\|^2_{I,h}}.
\end{equation*}
By Lemma \ref{kappa1}, we have $\|\hat\epsilon\|^2_{I,h}=h|\sigma|^2\frac{1}{1-|\kappa_1|^2}\leq \frac{K_\sigma}{2\eta}|\hat{U}_{xxx}(0,s)|^2 h^6$. The first term can be bounded as $\|\hat\epsilon\|^2_{B,h}=h\sum_{i=0}^1|\hat\epsilon_i|^2\leq K_\epsilon |\hat{U}_{xxx}(0,s)|^2 h^7\ll\|\hat\epsilon\|^2_{I,h}$ for small $h$. Thus, to the leading order,
\begin{equation*}
\|\hat\epsilon\|^2_h\leq \frac{\tilde K_\sigma}{2\eta } |\hat{U}_{xxx}(0,s)|^2 h^6,
\end{equation*}
where  $\tilde K_\sigma$ is a constant independent of $h$. By Parseval's relation, we have 
\begin{equation*}
\begin{split}
\int_0^\infty e^{-2\eta t}\|\epsilon\|^2_hdt&=\frac{1}{2\pi}\int_{-\infty}^{\infty}\|\hat{\epsilon}(\eta+i\xi)\|^2_h d\xi \\
&\leq \frac{\tilde K_\sigma h^6}{4\pi\eta}\int_{-\infty}^{\infty}|\hat{U}_{xxx}(0,\eta+i\xi)|^2d\xi \\
&=\frac{\tilde K_\sigma h^6}{2\eta}\int_0^\infty e^{-2\eta t}|U_{xxx}(0,t)|^2dt.
\end{split}
\end{equation*}
Note that in the above, we use Parseval's relation twice. By arguing that the future cannot affect the past, we can replace the upper limit of the integrals on both sides by a finite time $t_f$. Since 
\begin{equation*}
\begin{split}
&\int_0^{t_f} e^{-2\eta t_f}\|\epsilon\|^2_hdt\leq \int_0^{t_f} e^{-2\eta t}\|\epsilon\|^2_hdt, \\
&\int_0^{t_f} e^{-2\eta t}|U_{xxx}(0,t)|^2dt\leq \int_0^{t_f} |U_{xxx}(0,t)|^2dt,
\end{split}
\end{equation*}
 we obtain
\begin{equation*}
\sqrt{\int_0^{t_f} \|\epsilon\|^2_hdt} \leq h^3\sqrt{\frac{\tilde K_\sigma e^{2\eta t_f} }{2\eta}\int_0^{t_f} |U_{xxx}(0,t)|^2dt}.
\end{equation*}
Thus, the boundary error is $\mathcal{O}(h^3)$. 
In this case, the interior error is $\mathcal{O}(h^2)$ by (\ref{interior_estimate_1d}), which is the dominating source of error. Therefore, the overall convergence rate is 2.

In the case $\tau=\tau_2$, $C_{2D}(0,\tau)$ is singular, and the above analysis is not valid. By the energy estimate the convergence rate is $p+\frac{1}{2}=1.5$. 

\subsubsection{Proof of Theorem \ref{1dresult} for the fourth order scheme}\label{1d4o}
Away from the boundary, the solution of (\ref{Berreqn1dLaplaceTilde}) is
\begin{equation}\label{1dsbp4i}
\hat{\epsilon}_j=\sigma_1\kappa_1^{j-2}+\sigma_2\kappa_2^{j-2},\ j=2,3,4,\cdots,
\end{equation}
where $\kappa_1,\kappa_2$ are obtained from (\ref{characteristic_solution2}), and $\sigma_1,\sigma_2$ will be determined by the boundary system.
The first four rows of (\ref{Berreqn1dLaplaceTilde}) are affected by the penalty terms. They are
\begin{equation}\label{4thr4}
\begin{split}
\tilde{s}^2\hat{\epsilon_0}&=\left(\frac{122-48\tau}{17}\right)\hat{\epsilon}_0-5\hat{\epsilon}_1+4\hat{\epsilon}_2-\hat{\epsilon}_3+h^4\hat{T}_0^{4,B}, \\
\tilde{s}^2\hat{\epsilon}_1&=-\frac{85}{59}\hat{\epsilon}_0-2\hat{\epsilon}_1+\hat{\epsilon}_2+h^4\hat{T}_1^{4,B}, \\
\tilde{s}^2\hat{\epsilon}_2&=\frac{68}{43}\hat{\epsilon}_0+\frac{59}{43}\hat{\epsilon}_1-\frac{110}{43}\hat{\epsilon}_2+\frac{59}{43}\hat{\epsilon}_3-\frac{4}{43}\hat{\epsilon}_4+h^4\hat{T}_2^{4,B}, \\
\tilde{s}^2\hat{\epsilon}_3&=-\frac{17}{49}\hat{\epsilon}_0+\frac{59}{49}\hat{\epsilon}_2-\frac{118}{49}\hat{\epsilon}_3+\frac{64}{49}\hat{\epsilon}_4-\frac{4}{49}\hat{\epsilon}_5+h^4\hat{T}_3^{4,B}.
\end{split}
\end{equation}
By Taylor expansion, we have 
\begin{equation*}
\begin{split}
&\hat{T}_0^{4,B}=\frac{11}{12}\hat{U}_{xxxx}(0,s),\quad \hat{T}_1^{4,B}=-\frac{1}{12}\hat{U}_{xxxx}(0,s), \\
&\hat{T}_2^{4,B}=\frac{5}{516}\hat{U}_{xxxx}(0,s),\quad \hat{T}_3^{4,B}=\frac{11}{588}\hat{U}_{xxxx}(0,s)
\end{split}
\end{equation*}
to the leading order. 
With $\tilde{s}=0$, the boundary system follows from (\ref{1dsbp4i})--(\ref{4thr4})
\begin{equation*}
C_{4D}(0,\tau)\Sigma_{4D}=h^4\hat{T}_v^{4,B},
\end{equation*}
where $\Sigma_{4D}=[\sigma_1, \sigma_2, \hat{\epsilon}_0, \hat{\epsilon}_1]^T$,
$\hat{T}_v^{4,B}=[\hat{T}_0^{4,B}, \hat{T}_1^{4,B}, \hat{T}_2^{4,B}, \hat{T}_3^{4,B}]^T$
and $C_{4D}(0,\tau)$ is given in Appendix 1.
We solve for $\text{det}(C_{4D}(0,\tau))=0$, and obtain $\tau=\frac{2(4834\sqrt{3}+9569)}{177(8\sqrt{3}+37)}\approx 3.986350342$. Recall that the stability condition is $\tau\geq\tau_4\approx 3.986350342$.

In the case $\tau>\tau_4$, $C_{4D}(0,\tau)$ is nonsingular. Similarly to the second order case in \S\ref{1d2o}, we obtain to leading order
\begin{equation*}
\|\hat\epsilon\|^2_h\leq \frac{\tilde K_{\sigma_1}}{2\eta} |\hat{U}_{xxxx}(0,s)|^2 h^8.
\end{equation*}
By Parseval's relation and by arguing that the future cannot affect the past, we obtain
\begin{equation*}
\sqrt{\int_0^{t_f} \|\epsilon\|^2_hdt} \leq h^4\sqrt{\frac{\tilde K_{\sigma_1} e^{2\eta t_f} }{2\eta}\int_0^{t_f} |U_{xxxx}(0,t)|^2dt}.
\end{equation*}
Thus, the boundary error is $\mathcal{O}(h^4)$. In this case, the interior error is also $\mathcal{O}(h^4)$ given by (\ref{interior_estimate_1d}). Therefore, the convergence rate is 4.

In the case $\tau=\tau_4$, $C_{4D}(0,\tau_4)$ is singular. By the energy estimate, the convergence rate is $p+\frac{1}{2}=2.5$.

\subsubsection{Proof of Theorem \ref{1dresult} for the sixth order scheme}\label{1d6o}
Away from the boundary, the solution of (\ref{Berreqn1dLaplaceTilde}) is
\begin{equation}\label{1dsbp6i}
\hat{\epsilon}_j=\sigma_1\kappa_1^{j-3}+\sigma_2\kappa_2^{j-3}+\sigma_3\kappa_3^{j-3},\ j=3,4,5,\cdots,
\end{equation}
where $\kappa_1,\kappa_2,\kappa_3$ are obtained from (\ref{characteristic_solution3}), and $\sigma_1,\sigma_2,\sigma_3$ are determined by the boundary system. 
Near the boundary, the first six rows of (\ref{Berreqn1dLaplaceTilde}) are affected by the penalty terms. In the same way as for the second and fourth order method, we obtain the boundary system in $\Sigma_{6D}=[\hat{\epsilon}_0,\hat{\epsilon}_1,\hat{\epsilon}_2,\sigma_1,\sigma_2,\sigma_3]^T$:
\begin{equation*}
C_{6D}(\tilde{s},\tau)\Sigma_{6D}=h^5\hat{T}_v^{6,B},
\end{equation*}
where $C_{6D}(\tilde{s},\tau)$ is given in Appendix 1. We find that $C_{6D}(0,\tau)$ is singular if $\tau=\tau_6$, and nonsingular otherwise. When $\tau>\tau_6$, we use Taylor expansion to express $\hat{T}_v^{6,B}$ in terms of the derivatives of the true solution $U$, and Lemma \ref{kappa1} to derive an estimate for (\ref{1dsbp6i}). The boundary error is $\mathcal{O}(h^5)$. In this case, the interior error is $\mathcal{O}(h^6)$ given by (\ref{interior_estimate_1d}). Thus, the convergence rate is 5.

When $\tau=\tau_6$, the convergence rate $p+\frac{1}{2}=3.5$ is given by the energy estimate. 

\section{One dimensional wave equation with Neumann boundary condition}\label{section_Neumann}
Next, we consider an example which never satisfies the determinant condition, but nonetheless exhibits optimal convergence. We consider Equation (\ref{1dwave1}) with Neumann boundary condition
\begin{equation}\label{NeumannBC}
U_x(0,t)=g(t).
\end{equation}
We use the same assumption of the data as for the Dirichlet problem. With homogeneous Neumann boundary condition $g(t)=0$ we get an energy estimate identical to the one for the Dirichlet problem (\ref{1denergy}). To prove well-posedness, we also need to show that the equation is boundary stable. Boundary stability for the Neumann problem is proved in \cite[Theorem 3.8]{Kreiss2012} by Laplace transform technique. 

\subsection{Stability}
To discretize the equation in space, we use the grid and grid functions introduced for the Dirichlet problem. The semi--discretized equation of the Neumann problem is 
\begin{equation}\label{semi-Neumann}
u_{tt}=Du+H^{-1}E_0Su+F_g.
\end{equation}
On the right hand side of (\ref{semi-Neumann}), the first term approximates $U_{xx}$ and the second term imposes weakly the boundary condition $U_x(0,t)=0$. Since we consider a half line problem, the SBP operator $D$ can be written as $D=H^{-1}(-M-E_0S)$. As a consequence, the semi--discretization (\ref{semi-Neumann}) can be written as
\begin{equation}\label{semi-Neumann2}
u_{tt}=-H^{-1}Mu+F_g.
\end{equation}

Multiplying Equation (\ref{semi-Neumann2}) by $u_t^TH$ from the left, we obtain
\begin{equation*}
\frac{d}{dt}\left( \|u_t\|_H^2+\|u\|_M^2 \right)=2u_tHF.
\end{equation*}
The discrete energy $E_h^{Neu}=\vertiii{u}_h^{Neu}:= \|u_t\|_H^2+\|u\|_M^2 $ is bounded as
\begin{equation}\label{energy_Neumann}
\sqrt{E_h^{Neu}}\leq\sqrt{E_{h,0}^{Neu}}+\int_0^t \|F_g(\cdot,z)\|_Hdz.
\end{equation} 
(\ref{energy_Neumann}) is the energy estimate. 

\subsection{Accuracy}
Define the pointwise error as $\xi_j=U(x_j,t)-u_j(t)$. The error equation corresponding to (\ref{semi-Neumann2}) is 
\begin{equation}\label{err_eqn_Neumann}
\xi_{tt}=-H^{-1}M\xi+T^{2p,Neu},
\end{equation}
where $\|\xi\|_h<\infty$, $T^{2p,Neu}$ is the truncation error and $2p$ is the accuracy order of the SBP operator. In the same way as for the Dirichlet problem, by applying the energy method to the error equation (\ref{err_eqn_Neumann}), we obtain an estimate
\begin{equation*}
\vertiii{\xi}_h^{Neu}\leq C_{Neu} h^{p+1/2},
\end{equation*}
where $C_{Neu}$ is a constant independent of $h$. This means that the convergence rate of (\ref{semi-Neumann2}) is at least $p+1/2$. In the following, we use normal mode analysis to derive a sharp bound of the error, which agrees well with the results from numerical experiments.

Similar to the Dirichlet problem, we partition the error $\xi$ into two parts, the error $\epsilon^I$ due to the interior truncation error and the error $\epsilon$ due to the boundary truncation error. $\epsilon^I$ can be estimated by the energy method, yielding $\vertiii{\epsilon^I}_h^{Neu}\leq C_I^{Neu} h^{2p}$ where $C_I^{Neu}$ is a constant independent of $h$. The boundary error equation is
\begin{equation}\label{b_err_eqn_Neumann}
\epsilon_{tt}=-H^{-1}M\epsilon+h^pT^{2p,Neu,B}
\end{equation}
where $h^pT^{2p,Neu,B}$ is the boundary truncation error. $T^{2p,Neu,B}$ depends on the derivatives of the exact solution $U$, but not $h$. Moreover, only the first $m$ elements of $T^{2p,Neu,B}$ are nonzero, where $m$ depends on $p$ but not $h$. 

We Laplace transform Equation (\ref{b_err_eqn_Neumann}), with the notation $\tilde s=sh$, we obtain
\begin{equation}\label{b_err_eqn_Neumann_L}
{\tilde s}^2\hat\epsilon=-h^2H^{-1}M\hat\epsilon+h^{p+2}\hat T^{2p,Neu,B}
\end{equation}
Comparing with the error equation for the Dirichlet problem (\ref{Berreqn1dLaplaceTilde}), there are less rows affected by the boundary closure than that for the Drichlet problem. The characteristic equations of the interior error equations are the same as for the Dirichlet problem (\ref{characteristic_equation}). Lemma \ref{lemma_characteristic_solution} and \ref{kappa1} are also applicable to the Neumann problem. For the error equation near the boundary, we analyze below the second, fourth and sixth order accurate cases. 
\subsubsection{Second order accurate scheme}
For second order accurate scheme, only the first row of the error equation is affected by the boundary closure. The general solution of the error equation is
\begin{equation}\label{interior_err_2nd}
\hat\epsilon_j=\sigma\kappa_1^j,\ j=0,1,2,\cdots,
\end{equation}
where $\kappa_1=1-\tilde s+\mathcal{O}(\tilde s^2)$ is obtained from (\ref{characteristic_solution1}) and $\sigma$ will be determined by the boundary system. The first row of (\ref{b_err_eqn_Neumann_L}) is 
\begin{equation*}
{\tilde s}^2 \hat\epsilon_0 = -2 \hat\epsilon_0 + 2 \hat\epsilon_1 + h^3 \hat T_0^{2,Neu,B}.
\end{equation*}
From Equation (\ref{interior_err_2nd}), we obtain
\begin{equation}
C_{2N}(\tilde s)\sigma=h^3 \hat T_0^{2,Neu,B},\ C_{2N}(\tilde s)={\tilde s}^2 +2-2\kappa_1=2\tilde s+\mathcal{O}(\tilde s^2).
\end{equation}
Clearly, $C_{2N}(0)=0$. By Taylor expansion, we have
\begin{equation*}
\hat T_0^{2,Neu,B}=-\hat U_{xxx}(0,s)+\frac{1}{3}\hat U_{xxx}(0,s)=-\frac{2}{3}\hat U_{xxx}(0,s).
\end{equation*}
where $-h^3\hat U_{xxx}(0,s)$ is the truncation error by the SBP operator and $h^3\hat U_{xxx}(0,s)/3$ is the truncation error by the SAT.
Therefore, we have $|\sigma|\leq\frac{C}{|s|}|\hat U_{xxx}(0,s)|h^2$. In the same way as for the Dirichlet problem, an estimate in physical space follows, and we find that the boundary error is $\mathcal{O}(h^2)$. Since the interior error is also $\mathcal{O}(h^2)$, the convergence rate is 2. For second order accurate scheme, we gain two orders from the boundary truncation error in the Dirichlet problem, but only gain one order in the Neumann problem. Fortunately, this does not affect the overall convergence rate.

\subsubsection{Fourth and sixth order accurate scheme}\label{sec-Neumann-46}
We follow the above approach to derive the four by four boundary system 
\begin{equation*}
C_{4N}(\tilde s)\Sigma_{4N}=h^4 \hat T_u^{4,Neu,B},
\end{equation*}
for the fourth order scheme. At $\tilde s=0$ we find that  $C_{4N}(0)$ is singular with rank 3. By Taylor expansion, we derive the precise form of $\hat T_u^{4,Neu,B}$. 
To derive a sharp estimate for $|\Sigma_{4N}|$, we write $C_{4N}(\tilde s)=C_{4N}(0)+\tilde s C'_{4N}(\tilde s)+\mathcal{O}(\tilde s^2)$ and use Lemma 3.4 in \cite{Nissen2012}. Let singular value decomposition of $C_{4N}(0)$ be $U^*_{4N}S_{4N}V_{4N}$. We find that $U^*_{4N}C'_{4N}(0)V_{4N}=-0.3095\neq 0$. Moreover, $\hat T_u^{4,Neu,B}$ is in the column space of $C_{4N}(0)$. Hence, $|\Sigma_{4N}|\leq K_{4N}|\hat U_{xxxx}(0,s)| h^4$ where $K_{4N}$ is a constant independent of $h$. This means that the boundary error is $\mathcal{O}(h^4)$ and there is a gain of 2 in accuracy order. In this case, the interior error is also $\mathcal{O}(h^4)$, and the overall  convergence rate is 4. We show $C_{4N}(0)$, $C'_{4N}(0)$ and $\hat T_u^{4,Neu,B}$ in Appendix 2.

The situation of the sixth order scheme is very similar to that of the fourth order scheme. The six by six boundary system
\begin{equation*}
C_{6N}(\tilde s)\Sigma_{6N}=h^5 \hat T_u^{6,Neu,B}
\end{equation*}
is singular at $\tilde s=0$ with rank 5. The exact form of $T_u^{6,Neu,B}$ is derived by Taylor expansion.
Here again we find that Lemma 3.4 in \cite{Nissen2012} is applicable with $U^*_{6N}C'_{6N}(0)V_{6N}=0.2072 - 0.0001i\neq 0$, and $\hat T_u^{6,Neu,B}$ is in the column space of $C_{6N}(0)$.
As a consequence, the boundary error is $\mathcal{O}(h^5)$. Since the interior error is $\mathcal{O}(h^6)$, the convergence rate is 5. $C_{6N}(0)$, $C'_{6N}(0)$ and $\hat T_u^{6,Neu,B}$ are given in Appendix 2.
\begin{remark}
For fourth and sixth order scheme, the optimal $p+2$ convergence rate relies on the fact that the boundary truncation error vector is in the column space of $C_{4/6N}(0)$. In the numerical experiments in \S\ref{numericalexperiments}, we will verify the optimal convergence rate. If the boundary truncation error would not be in the column space of $C_{4/6N}(0)$, we only obtain $p+1$ convergence rate by using Lemma 3.4 in \cite{Nissen2012}. In the numerical experiments, we will add a dissipative term to the boundary part of the SBP operator, so that the boundary truncation error is not in the column space of $C_{4/6N}(0)$ but remains the same magnitude. We indeed observe $(p+1)^{th}$ order convergence rate.
\end{remark}

\section{One dimensional wave equation with a grid interface}\label{section_interface}
In this section, we consider another example that does not satisfy the determinant condition. It is the Cauchy problem for the second order wave equation in one dimension
\begin{equation}\label{wave_2d}
\begin{split}
&U_{tt} = U_{xx} + F,\ x\in (-\infty,\infty),\ t\geq 0,\ \|U(\cdot,t)\|<\infty\\
&U(x,0)=f^1(x),\ U_t(x,0)=f^2(x), 
\end{split}
\end{equation}
where the forcing function $F$ , the initial data $f^1$ and $f^2$ are compatible smooth functions with compact support. It is straightforward to derive the energy estimate of the form (\ref{1denergy}).

We solve the equation on a grid with a grid interface at $x=0$. With the assumption that the exact solution is smooth, it is natural to impose two interface conditions at $x=0$: continuity of the solution and continuity of the first derivative of the solution.

We introduce the grid on the left $x_{-j}=-jh_L,\ j=0,1,2,\cdots$, and the grid on the right $x_{j}=jh_R,\ j=0,1,2,\cdots$, with the grid size $h_L$ and $h_R$, respectively. The grid interface is located at $x=0$. The grid functions are $u_j^L(t)\approx U(x_{-j},t)$ and $u_j^R(t)\approx U(x_j,t)$. Denote 
\begin{equation*}
\begin{split}
& e_{0L}=[\cdots,0,0,1]^T,\quad e_{0R}=[1,0,0,\cdots]^T,\\
& u^L=[\cdots,u_{-1}^L,u_0^L]^T,\quad u^R=[u_0^R,u_1^R,\cdots]^T.
\end{split}
\end{equation*}
Both $u_0^L=e_{0L}^Tu^L$ and $u_0^R=e_{0R}^Tu^R$ approximate $U(0,t)$. Both $(S_Lu^L)_0=e_{0L}^TS_Lu^L$ and $(S_Ru^R)_0=e_{0R}^TS_Ru^R$ approximate $U_x(0,t)$. To simplify notation, we define $\{u\}=u_0^L-u_0^R$ and $\{Su\}=(S_Lu^L)_0-(S_Ru^R)_0$. The semi-discretized equation reads
\begin{equation}\label{1dintsemi}
\begin{split}
u_{tt}^L&=D_Lu^L+\frac{1}{2}H_L^{-1}S_L^Te_{0L}\{u\}-\frac{1}{2}H_L^{-1}e_{0L}\{Su\}-\tau_LH_L^{-1}e_{0L}\{u\}+F_{gL}, \\
u_{tt}^R&=D_Ru^R+\frac{1}{2}H_R^{-1}S_R^Te_{0R}\{u\}-\frac{1}{2}H_R^{-1}e_{0R}\{Su\}-\tau_RH_R^{-1}e_{0R}\{u\}+F_{gR},
\end{split}
\end{equation}
where $F_{gL}$ and $F_{gR}$ are the grid functions corresponding to $F(x,t)$. The penalty parameters $\tau_L$ and $\tau_R$ are chosen so that the semi-discretization is stable. By the energy method, the numerical scheme is stable if $\tau_L=-\tau_R\geq\frac{h_L+h_R}{4\alpha_{2p}h_Lh_R}:=\tilde{\tau}_{2p}$. The stability proof is found in \cite[Lemma 2.4, pp.~215]{Mattsson2008}. If $h_L=h_R:=h$, then $\tilde \tau_{2p}=\frac{1}{2\alpha_{2p}h}$.

By the energy estimate, the convergence rate is at least $p+\frac{1}{2}$ if the semi-discretization is stable. In order to derive a sharper estimate, we follow the same approach as for the half line problem in \S \ref{1dhalfline}. In the remaining part of this section we will only consider the effect of the interface truncation error. The interior truncation error results in an error $\mathcal{O}(h^{2p})$ in the solution. Denote

\begin{equation*}
 \epsilon_{-j}^L(t)=U(x_{-j},t)-u_{-j}^L(t), \quad \epsilon_j^R(t)=U(x_{j},t)-u_{j}^R(t),\quad j=0,1,2,\cdots,
\end{equation*} 
and $\tau=\tau_L=-\tau_R$. The error equation reads
\begin{equation*}
\begin{split}
\epsilon_{tt}^L&=D_L\epsilon^L+\frac{1}{2}H_L^{-1}S_L^Te_{0L}\{\epsilon\}-\frac{1}{2}H_L^{-1}e_{0L}\{S\epsilon\}-\tau H_L^{-1}e_{0L}\{\epsilon\}+h^p_LT^{2p,L,B}, \\
\epsilon_{tt}^R&=D_R\epsilon^R+\frac{1}{2}H_R^{-1}S_R^Te_{0R}\{\epsilon\}-\frac{1}{2}H_R^{-1}e_{0R}\{S\epsilon\}+\tau H_R^{-1}e_{0R}\{\epsilon\}+h^p_RT^{2p,R,B},
\end{split}
\end{equation*}
with the truncation error $h^p_LT^{2p,L,B}$ and $h^p_RT^{2p,R,B}$ as the forcing terms. 
We take the Laplace transform in time of the interface error equation, and obtain
\begin{align}
\tilde{s}_L^2\hat{\epsilon}^L = &\ h_L^2 D_L\hat{\epsilon}^L+\frac{h_L^2}{2}  H_L^{-1}S_L^Te_{0L}\{\hat\epsilon\}-\frac{h_L^2}{2} H_L^{-1}e_{0L}\{S\hat\epsilon\} \label{int3}\\
& -\tau h_L^2 H_L^{-1}e_{0L}\{\hat\epsilon\}+h_L^{p+2}\hat{T}^{2p,L,B}, \nonumber \\
\tilde{s}_R^2\hat{\epsilon}^R = &\ h_R^2D_R\hat{\epsilon}^R+\frac{h_R^2}{2} H_R^{-1}S_R^Te_{0R}\{\hat\epsilon\}-\frac{ h_R^2}{2} H_R^{-1}e_{0R}\{S\hat\epsilon\} \label{int4} \\
&+\tau h_R^2 H_R^{-1}e_{0R}\{\hat\epsilon\}+h_R^{p+2}\hat{T}^{2p,R,B}, \nonumber
\end{align}
where $\tilde{s}_L=sh_L$ and $\tilde{s}_R=sh_R$.
We use normal mode analysis to derive the error estimates for the second and fourth order method. The result is summarized in the following theorem.

\begin{theorem}\label{1dintresult}
For the second and fourth order stable SBP-SAT approximation (\ref{1dintsemi}) of the Cauchy problem with a grid interface, the numerical solution converges to the true solution at rate $q$. The values of $q$ depend on $\tau$, and are listed in Table \ref{1dint_table}.
\begin{table}
\centering
\begin{tabular}{c c c}
\toprule
$2p$ & $q({\tau>\tilde\tau_{2p}})$ & $q(\tau=\tilde\tau_{2p})$ \\ 
\midrule
2 & 2 & 1.5 \\
4 & 4 & 2.5 \\
\bottomrule
\end{tabular}
\caption{Theoretical convergence result for one dimensional wave equation with a grid interface.}
\label{1dint_table}
\end{table}
\end{theorem}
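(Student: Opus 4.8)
The plan is to run the normal mode analysis of \S\ref{1dhalfline} and \S\ref{section_Neumann} on the coupled two--grid system (\ref{int3})--(\ref{int4}). First I would write the admissible bounded solutions away from the interface on each grid separately. On the right grid $\hat\epsilon^R_j=\sum_i\sigma^R_i\,\kappa_i(\tilde s_R)^{j}$, summed over the roots with $|\kappa_i|<1$ of (\ref{characteristic_equation}) furnished by Lemma \ref{lemma_characteristic_solution}; on the left grid the analogous expansion in $\kappa_i(\tilde s_L)$ decays in the $-j$ direction. Because mesh refinement keeps the ratio $h_L/h_R$ fixed, I would set $h_R=r\,h_L$ so that $\tilde s_R=r\,\tilde s_L$ and a single small parameter governs both expansions as $\tilde s\to 0$.

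Next I would substitute these expressions into the finitely many rows of (\ref{int3})--(\ref{int4}) modified by the interface penalties, expressing the coupling through $\{\hat\epsilon\}=\hat\epsilon^L_0-\hat\epsilon^R_0$ and $\{S\hat\epsilon\}$. This produces a boundary system $C(\tilde s,\tau)\Sigma=h^{p+2}\hat T_v$, where $\Sigma$ collects the mode amplitudes $\sigma^L_i,\sigma^R_i$ together with the near--interface grid values on both sides, and where the right--hand side is obtained, as in the earlier sections, by Taylor expanding the interface truncation error to leading order in the derivatives of $U$ at $x=0$. The heart of the proof is the behaviour of $C(0,\tau)$. Consistent with the statement that the interface problem does not satisfy the determinant condition, I expect $C(0,\tau)$ to be singular at $\tilde s=0$ for the admissible range of $\tau$, the kernel arising from the constant interface mode that the roots $\kappa_1(\tilde s_L),\kappa_1(\tilde s_R)\to 1$ contribute as $\tilde s\to 0$.

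I would therefore treat the singular system by the device used for the Neumann problem in \S\ref{sec-Neumann-46}, writing $C(\tilde s,\tau)=C(0,\tau)+\tilde s\,C'(0,\tau)+\mathcal{O}(\tilde s^2)$ and applying Lemma 3.4 in \cite{Nissen2012}. For $\tau>\tilde\tau_{2p}$ the decisive checks are that $\hat T_v$ lies in the column space of $C(0,\tau)$ and that the projected first--order coefficient $U^*C'(0,\tau)V$ is nonzero in the singular--value decomposition of $C(0,\tau)$; these together yield $|\Sigma|\le K h^{p+2}$ with $K$ depending on the derivatives of $U$ at $x=0$. Lemma \ref{kappa1} then controls the geometric tails from the slow roots $\kappa_1(\tilde s_L),\kappa_1(\tilde s_R)$ by $(2\eta h)^{-1}$ (the remaining admissible roots stay bounded away from the unit circle and are harmless), and Parseval's relation together with the ``future cannot affect the past'' truncation converts the Laplace--space bound into $\sqrt{\int_0^{t_f}\|\epsilon\|^2_h\,dt}\le C h^{p+2}$. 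This gain of two orders is matched by the interior error $\mathcal{O}(h^{2p})$ for $2p=4$ and dominated by it for $2p=2$, giving the rates in the middle column of Table \ref{1dint_table}. At the threshold $\tau=\tilde\tau_{2p}$ I expect the degeneracy to worsen, so that the nondegeneracy hypothesis of Lemma 3.4 fails; the gain--of--two argument then breaks down and I would fall back on the energy estimate, which already guarantees the rate $p+\tfrac12$ in the last column.

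The main obstacle is algebraic rather than conceptual. I must assemble $C(\tilde s,\tau)$ and $C'(0,\tau)$ correctly from both grids with the interface jumps --- this roughly doubles the size of the matrices $C_{2D},C_{4D}$ of \S\ref{1d2o}--\S\ref{1d4o} --- and carry the two root families expanded in the distinct scaled variables $\tilde s_L$ and $\tilde s_R$ through the computation when $h_L\ne h_R$. The delicate points are verifying that $\hat T_v$ genuinely lies in the column space of $C(0,\tau)$, computing $U^*C'(0,\tau)V$ for the fourth order scheme, and establishing the precise dichotomy in $\tau$: that the nondegeneracy holds for every $\tau>\tilde\tau_{2p}$ but fails exactly at the stability threshold $\tau=\tilde\tau_{2p}$. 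Settling this dichotomy is what separates the optimal gain of two from the mere $1/2$--order gain at the critical penalty.
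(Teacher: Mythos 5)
Your overall strategy is the paper's: assemble the coupled boundary system from the rows of (\ref{int3})--(\ref{int4}) affected by the penalties, observe that $C(0,\tau)$ is singular for all admissible $\tau$, expand $C(\tilde s,\tau)=C(0,\tau)+\tilde s\,C'(0,\tau)+\mathcal{O}(\tilde s^2)$, invoke Lemma 3.4 of \cite{Nissen2012}, and finish with Lemma \ref{kappa1} and Parseval's relation. However, there is a concrete error in your treatment of the second order scheme. You assert that for $\tau>\tilde\tau_{2p}$ the truncation error vector lies in the column space of $C(0,\tau)$ and hence $|\Sigma|\le Kh^{p+2}$ for both $2p=2$ and $2p=4$. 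The paper finds the opposite for $2p=2$: $\hat T_u^{2,B}$ is \emph{not} in the column space of $C_{2I}(0,\tau)$ (checked numerically via SVD for a range of $\tau$ values), so the weaker conclusion $|C_{2I}^{-1}(\tilde s,\tau)|\le K|\tilde s|^{-1}$ applies, giving $|\Sigma|\le (K/\eta)\,|\hat U_{xxx}(0,s)|\,h^{2}$ --- a gain of only one order, and an interface error of $\mathcal{O}(h^{2})$ rather than $\mathcal{O}(h^{3})$. Your final entry $q=2$ survives only because the interior error is also $\mathcal{O}(h^{2})$, but the mechanism is not the one you describe, and the distinction matters: it is precisely the failure of the column-space condition that makes the second order interface case a ``gain of one'' example in the paper's taxonomy, in contrast to the fourth order case where $\hat T_u^{4,B}$ does lie in the column space and the full two-order gain $|\Sigma|\le K_\Sigma|\hat U_{xxxx}(0,s)|h^4$ is realized.

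Two smaller points. First, when the inverse only satisfies $|C^{-1}(\tilde s,\tau)|\le K|\tilde s|^{-1}$, the bookkeeping in the $\mathrm{L}_2$ norm changes: the tail contribution $h|\sigma|^2/(1-|\kappa_1|^2)$ picks up an extra factor $1/\eta$ from Lemma \ref{kappa1} on top of the $1/\eta^2$ from $|\Sigma|^2$, so one lands at $\|\hat\epsilon\|^2_h\le K\eta^{-3}|\hat U_{xxx}(0,s)|^2h^4$; this is worth carrying explicitly since it is where the single order of gain is actually collected. Second, your proposed dichotomy in $\tau$ is established in the paper only by numerically sampling the SVD over a finite set of penalty values (the symbolic SVD being intractable with $\tau$ as a parameter), so ``establishing the precise dichotomy for every $\tau>\tilde\tau_{2p}$'' is harder than your plan suggests; what the paper actually verifies is the rank drop from $5$ to $4$ (second order) and from $7$ to $6$ (fourth order) at $\tau=\tilde\tau_{2p}$, together with the nondegeneracy $(U^*C'(0,\tau)V)_{66}\neq 0$ at the sampled points.
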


We use the notation $h=h_L=rh_R$ in the proof where $r$ is the mesh size ratio. The proof follows in the same approach as before. We construct the characteristic equation on each side and solve them to obtain the general solution, which are given by (\ref{characteristic_equation}) and  (\ref{characteristic_solution}), respectively. The boundary system is formulated by inserting the general solution to the error equation. The accuracy order is determined by how the solution of this system behaves with respect to $h$ in the vicinity of $\tilde s=0$.

\subsection{Proof of Theorem \ref{1dintresult} for the second order scheme}\label{1dint2o}
Near the interface, the last three rows of  (\ref{int3}) and the first three rows of (\ref{int4}) are affected by the penalty terms. 
The six by six boundary system takes the form
\begin{equation*}
C_{2I}(\tilde{s},\tau)\Sigma=h^3\hat{T}^{2,B}_u,
\end{equation*}
where
\begin{equation*}
\begin{split}
&\Sigma=[\sigma_{1L}, \sigma_{1R}, \hat{\epsilon}_0^L, \hat{\epsilon}_{-1}^L, \hat{\epsilon}_0^R, \hat{\epsilon}_1^R]^T, \\
&\hat{T}_u^{2,B}=[\frac{2}{3}+\frac{1}{3r^2}, 0, 0, -\frac{2}{3r}-\frac{r}{3}, 0, 0]^T\hat{U}_{xxx}(0,s).
\end{split}
\end{equation*}
$C_{2I}(0,\tau)$ is singular with rank 5 if $\tau>\tilde{\tau}_2$, and singular with rank 4 if $\tau=\tilde{\tau}_2$.
The Taylor expansion of $C_{2I}(\tilde{s},\tau)$ at $\tilde{s}=0$ is given in Appendix 3.

In the case $\tau>\tilde{\tau}_2$, we use the Taylor expansion of $C_{2I}(\tilde{s},\tau)$ at $\tilde{s}=0$, and Lemma 3.4 in \cite{Nissen2012} to derive an estimate for $|\Sigma|$.
We need to perform singular value decomposition (SVD) on $C_{2I}(0,\tau)$, which is very difficult due to the variable $\tau$ in the matrix. Instead, we compute numerically the SVD of $C_{2I}(0,\tau)$ for $\tau=1.1\hat{\tau}_2,1.2\hat{\tau}_2,\cdots,5\hat{\tau}_2$. In all cases, we find that $(U^*C'_{2I}(0,\tau)V)_{66}\neq 0$, and that $\hat{T}_u^{2,B}$ is not in the column space of $C_{2I}(0,\tau)$. Therefore, in a vicinity of $|\tilde{s}|=0$, we have $|C_{2I}^{-1}(\tilde s,\tau)|\leq K_{\tilde c}|{\tilde s}|^{-1}$. Thus, 
\begin{equation*}
|\Sigma|\leq  |C_{2I}^{-1}(\tilde{s},\tau)| |\hat{T}_u^{2,B}| h^3 \leq K_c |\tilde{s}|^{-1} |\hat{U}_{xxx}(0,s)| h^3\leq \frac{K_c}{\eta}  |\hat{U}_{xxx}(0,s)| h^2,
\end{equation*}
where $\text{Re}(\tilde{s})\geq h\eta>0$ and $K_c$ is independent of $h$ and $s$. For the error $\hat\epsilon$ in L$_2$ norm $\|\hat\epsilon\|^2_h=\|\hat{\epsilon}^L\|^2_{h_L}+\|\hat{\epsilon}^R\|^2_{h_R}$, we have
\begin{equation*}
\begin{split}
\|\hat\epsilon\|^2_h &= h_L \sum_{i=0}^1 |\hat{\epsilon}^L_{-i}|^2 + h_R \sum_{i=0}^1 |\hat{\epsilon}^R_i|^2 + h_L|\sigma_L|^2\sum_{i=0}^\infty |\kappa_{1L}|^{2i} + h_R|\sigma_R|^2\sum_{i=0}^\infty |\kappa_{1R}|^{2i} \\
&= h_L \sum_{i=0}^1 |\hat{\epsilon}^L_{-i}|^2 + h_R \sum_{i=0}^1 |\hat{\epsilon}^R_i|^2 + \frac{h_L|\sigma_L|^2}{1-|\kappa_{1L}|^2}+ \frac{h_R|\sigma_R|^2}{1-|\kappa_{1R}|^2} \\
&\leq \frac{K_{c1}}{\eta^2}|\hat{U}_{xxx}(0,s)|^2h^5+\frac{K_{c2}}{\eta^3}|\hat{U}_{xxx}(0,s)|^2h^4 \\
&\leq \frac{K_{c12}}{\eta^3}|\hat{U}_{xxx}(0,s)|^2h^4.
\end{split}
\end{equation*}
As before, we obtain
\begin{equation*}
\sqrt{\int_0^{t_f} \|\epsilon\|^2_h dt} \leq h^2\sqrt{\frac{K_{c12}e^{2\eta t_f}}{\eta^3}\int_0^{t_f} |U_{xxx}(0,t)|^2 dt}.
\end{equation*}
Thus, the interface error is $\mathcal{O}(h^2)$ and the accuracy gain is only one order. In this case, the interior error can be estimated to $\mathcal{O}(h^2)$ by an energy estimate, and the overall convergence rate is therefore 2.

In the case $\tau=\tilde{\tau}_2$, the numerical scheme is still stable but the above analysis is no longer valid. By the energy estimate, the convergence rate is $p+\frac{1}{2}=1.5$. 

\subsection{Proof of Theorem \ref{1dintresult} for the fourth order scheme}\label{1dint4o}
In this case, the eight by eight boundary system takes the form
\begin{equation*}
C_{4I}(\tilde{s},\tau)\Sigma=h^4\hat{T}^{4,B}_u,
\end{equation*}
where
\begin{equation*}
\begin{split}
&\Sigma=[\sigma_{1L}, \sigma_{2L}, \sigma_{1R}, \sigma_{2R}, \hat{\epsilon}_0^L, \hat{\epsilon}_{-1}^L, \hat{\epsilon}_0^R, \hat{\epsilon}_1^R]^T, \\
&\hat{T}_u^{4,B}=[\frac{115}{204}-\frac{6}{17r^3}, -\frac{1}{12}, \frac{5}{516}, \frac{11}{588}, \frac{115}{204r^4}-\frac{6}{17r}, -\frac{1}{12r^4}, \frac{5}{516r^4}, \frac{11}{588r^4}]^T\hat{U}_{xxxx}(0,s).
\end{split}
\end{equation*}
The boundary system is also singular in this case. The Taylor expansion of $C_{4I}(\tilde{s},\tau)$ at $\tilde{s}=0$ is given in Appendix 3.
We find that $C_{4I}(0,\tau)$ is singular with rank 7 if $\tau>\tilde{\tau}_4$, and singular with rank 6 if $\tau=\tilde{\tau}_4$.

In the case $\tau>\tilde{\tau}_4$, Lemma 3.4 in \cite{Nissen2012} is applicable since $\hat{T}_u^{4,B}$ is in the column space of $C_{4I}(0,\tau)$. We have
\begin{equation*}
|\Sigma|\leq K_\Sigma |\hat{U}_{xxxx}(0,s)|h^4,
\end{equation*}
where $K_\Sigma$ is independent of $h$ and $s$. As for the second order case, it then follows that the interface error is $\mathcal{O}(h^4)$, that is, the gain in accuracy order at the interface is 2 . In this case, the interior error is $\mathcal{O}(h^4)$ given by an energy estimate. Therefore, the convergence rate is 4.

In the case $\tau=\tilde{\tau}_4$, the numerical scheme is still stable, and the energy estimate gives a sharp estimate. The convergence rate is $p+\frac{1}{2}=2.5$. 

\section{Two dimensional wave equation}\label{2danalysis}
In this section, we consider the second order wave equation in two dimensions:
\begin{equation}
\begin{split}
&U_{tt}=U_{xx}+U_{yy}+F,\ x\in [0,\infty),\ y\in (-\infty,\infty),\ t\geq 0,\\
&U(x,y,0)=f^1(x,y),\ U_t(x,y,0)=f^2(x,y).
\end{split}
\end{equation}
We consider the half plane problem in $x$ with Dirichlet boundary condition
\begin{equation}
U(0,y,t)=g(y,t).
\end{equation} 
The domain in $y$ is the whole real line. For convenience, we require that all data are $2\pi$--periodic in $y$ so that we only need to deal with a finite interval $y\in [0,2\pi]$.
In addition, we assume that the functions $F$, $f^1$, $f^2$ and $g$ are compatible smooth functions with compact support. The L$_2$ norm of the solution is bounded, i.e. $\|U\|<\infty$. Similar to the wave equation in one dimension, with homogeneous Dirichlet boundary condition the continuous energy $E=\|U_t\|^2+\|U_x\|^2+\|U_y\|^2$ is bounded by the data.

Next, we introduce the grid and grid functions. In order to simplify notation, we assume that the mesh size is $h$ in both $x$ and $y$ direction. The grid is
\begin{equation*}
x_i=ih,\ i=0,1,2,\cdots,\quad y_j=jh,\ j=0,1,2,\cdots,N_y,
\end{equation*}
such that $h=\frac{2\pi}{N_y}$, and the grid functions are $u_{ij}(t)\approx U(x_i,y_j,t)$. We arrange the grid functions $u_{ij}(t)$ column wise in a vector $\bm{u}$.   The general notation used to expand an operator from one dimension to two dimensions is $\bm{A_x}=A_x\otimes I_y$, where $\otimes$ is Kronecker product and $I_y$ is the identity operator. With homogeneous boundary condition, the semi-discretized equation reads 
\begin{equation}\label{semi2d}
\bm{u_{tt}}=\bm{D_{xx}u}+\bm{D_{yyp}u}-\bm{H^{-1}S^TE_0u}-\frac{\tau}{h}\bm{H^{-1}E_0u}+\bm{F_g}.
\end{equation}
$D_{xx}$ is the standard SBP operator approximating $\frac{\partial^2}{\partial x^2}$ and $D_{yyp}$ is the standard central finite difference operator approximating  $\frac{\partial^2}{\partial y^2}$. In the same way as for the wave equation in one dimension, a discrete energy estimate is obtained if the penalty parameter $\tau$ is chosen so that $\tau\geq\frac{1}{\alpha_{2p}}$, where the values of $\alpha_{2p}$ are listed in Table \ref{alphavalue}. 

\subsection{Accuracy analysis}
We analyze the accuracy of the numerical scheme by normal mode analysis. Denote $\bm{\epsilon}$ the pointwise error due to the boundary truncation error. Then the boundary error equation is 
\begin{equation*}
\bm{\epsilon_{tt}}=\bm{D_{xx}\epsilon}+\bm{D_{yyp}\epsilon}-\bm{H^{-1}S^TE_0\epsilon}-\frac{\tau}{h}\bm{H^{-1}E_0\epsilon}+h^p\bm{T^{2p}},
\end{equation*}
where $\|\epsilon\|_h<\infty$. We take the Laplace transform in $t$ and Fourier transform in $y$, and obtain for every $\omega$
\begin{equation}\label{err2dfl}
{\tilde{s}^2_+}\tilde{\hat{\epsilon}}_\omega=h^2D_{xx}\tilde{\hat{\epsilon}}_\omega-h^2H^{-1}S^TE_0\tilde{\hat{\epsilon}}_\omega-\tau hH^{-1}E_0\tilde{\hat{\epsilon}}_\omega+h^{p+2}\tilde{\hat{T}}^{2p}_\omega,
\end{equation}
where $\tilde{s}_+=\sqrt{\tilde{s}^2-h^2\tilde{D}_{yyp}^\omega}$, $\tilde{s}=sh$ and $\tilde{D}_{yyp}^\omega$ is the Fourier transform of $D_{yyp}$. For different orders of accuracy, we have
\begin{equation*}
\tilde{D}_{yyp}^\omega=\begin{cases}
-\frac{4}{h^2}\sin^2\frac{\omega h}{2} & \text{if } 2p=2, \\
-\frac{4}{h^2}\sin^2\frac{\omega h}{2}(1+\frac{1}{3}\sin^2\frac{\omega h}{2})& \text{if } 2p=4, \\
-\frac{4}{h^2}\sin^2\frac{\omega h}{2}(1+\frac{1}{3}\sin^2\frac{\omega h}{2}+\frac{8}{45}\sin^4\frac{\omega h}{2}) & \text{if } 2p=6.
\end{cases}
\end{equation*}
Note that $\tilde{D}_{yyp}^\omega$ is nonpositive. 
\begin{lemma}\label{splus}
For Re$(\tilde{s})>0$ and $\tilde{D}_{yyp}^\omega$ defined above, it follows that Re$(\tilde{s}_+)>0$.
\end{lemma}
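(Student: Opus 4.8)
The plan is to recognize this as a statement about the branch of a complex square root, and to reduce it to showing that the radicand avoids the branch cut. Since the paper has just observed that $\tilde{D}_{yyp}^\omega$ is nonpositive for every $\omega$, I would begin by setting $b:=-h^2\tilde{D}_{yyp}^\omega\ge 0$, a \emph{real} nonnegative number, so that the definition becomes $\tilde{s}_+^2=\tilde{s}^2+b$ with $\tilde{s}_+=\sqrt{\tilde{s}^2+b}$ understood in the principal branch (the branch used throughout the normal mode analysis, for which $\tilde{s}_+\to\tilde{s}$ as $b\to 0$). The claim to be proved is then purely elementary: if $\text{Re}(\tilde{s})>0$ and $b\ge 0$, then $\text{Re}(\tilde{s}_+)>0$.

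The key step is the standard fact that the principal square root sends the slit plane $\mathbb{C}\setminus(-\infty,0]$ into the open right half-plane $\{\text{Re}>0\}$. Hence it suffices to check that $z:=\tilde{s}^2+b$ never lands on the nonpositive real axis when $\text{Re}(\tilde{s})>0$ and $b\ge 0$. Writing $\tilde{s}=x+iy$ with $x>0$, I would split on the imaginary part: since $\text{Im}(z)=\text{Im}(\tilde{s}^2)=2xy$ and $x>0$, this vanishes only when $y=0$. If $y\ne 0$ then $z$ lies off the real axis and is therefore not on the cut. If $y=0$, then $\tilde{s}=x$ is real and positive and $z=x^2+b>0$ is strictly positive real, again off $(-\infty,0]$. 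In either case $z\notin(-\infty,0]$, so $\text{Re}(\tilde{s}_+)>0$, as claimed.

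The only genuine subtlety---and hence the main obstacle---is the bookkeeping of the branch convention; nothing in the argument is computational. If I wanted to avoid invoking the mapping property of the principal root altogether, I would instead write $\tilde{s}_+=\mu+i\nu$ and use $\mu^2-\nu^2=\text{Re}(z)$ together with $\mu^2+\nu^2=|z|$ to get $2\mu^2=|z|+\text{Re}(z)$. This quantity is strictly positive exactly when $z\notin(-\infty,0]$, which the previous paragraph established, so $\mu\ne 0$; selecting the root with $\tilde{s}_+\to\tilde{s}$ as $b\to 0$ then fixes the sign $\mu>0$. This verifies $\text{Re}(\tilde{s}_+)>0$ without any case analysis on the branch.
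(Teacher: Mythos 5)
Your proof is correct and follows essentially the same route as the paper's: both arguments reduce the claim to the fact that the principal square root has vanishing real part only when its radicand lies on $(-\infty,0]$, and then verify that $\tilde{s}^2-h^2\tilde{D}_{yyp}^\omega$ cannot lie there when $\mathrm{Re}(\tilde{s})>0$ and $\tilde{D}_{yyp}^\omega\le 0$. The paper phrases this as a proof by contradiction while you argue directly (and your explicit $y\ne 0$ versus $y=0$ case split actually patches a small gap in the paper's wording, where ``$\mathrm{Re}(\tilde{s}^2)\le 0$ contradicts $\mathrm{Re}(\tilde{s})>0$'' is only valid because the radicand is also known to be real), but the underlying idea is identical.
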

\begin{proof}
By the definition of principal square root, Re$(\tilde{s}_+)\geq 0$. Assume Re$(\tilde{s}_+)= 0$, then $\tilde{s}^2-h^2\tilde{D}_{yyp}^\omega$ is real and nonpositive, i.e. Re$(\tilde{s}^2-h^2\tilde{D}_{yyp}^\omega)\leq 0$. We then obtain Re$(\tilde{s}^2)\leq$ Re$(h^2\tilde{D}_{yyp}^\omega)\leq 0$. This contradicts Re$(\tilde s)>0$. Therefore, Re$(\tilde{s}_+)>0$.
\end{proof}

The truncation error $\bm{{T}^{2p}}$ can be written as $\bm{{T}^{2p}}=[T^{2p}_0;T^{2p}_1;\cdots]$, where $T^{2p}_i=[T^{2p}_{i0}; T^{2p}_{i1};\cdots;T^{2p}_{iN_y} ]$ is an $N_y+1$-by-1 vector. For $2p^{th}$ order accurate method, we have 
\begin{equation*}
\begin{split}
&T^{2p}_{kj}=c_kU^{(p+2)}(x_k,y_j,t),\quad 0\leq k\leq m-1,\quad 0\leq j\leq N_y, \\
&T^{2p}_{k}=\bm{0},\quad k\geq m
\end{split}
\end{equation*}
to the leading order, where $m=1,4,6,8,11$ for $2p=2,4,6,8,10$, $c_k$ is a constant independent of $h$, $U^{(p+2)}:=\frac{\partial^{p+2} U}{\partial x^{p+2}}$ and $\bm{0}$ is a zero vector of size $N_y+1$-by-$1$ . We take the Fourier transform in $y$ and Laplace transform in $t$, and obtain
\begin{equation*}
\begin{split}
&\tilde{\hat{T}}^{2p}_{k}=c_k\tilde{\hat{U}}^{(p+2)}(x_k,\omega,s),\quad 0\leq k\leq m-1, \\
&\tilde{\hat{T}}^{2p}_{k}=\bm{0},\quad k\geq m.
\end{split}
\end{equation*}
Similar to the one dimensional case, we only need to consider a vicinity of $|\tilde s|=0$ with Re$(\tilde s)>0$. By Lemma \ref{splus}, we have Re$(\tilde s_+)>0$. The boundary part of Equation (\ref{err2dfl}) can be written in the matrix vector multiplication form $C(\tilde s_+,\tau)\Sigma_\omega=h^{p+2}\tilde{\hat{T}}^{2p,B}_\omega$. 

We then need to investigate the invertibility of $C(\tilde s_+,\tau)$ and the dependence of $|C^{-1}(\tilde s_+,\tau)\tilde{\hat{T}}^{2p}_\omega|$ on $h$. We note that Equation (\ref{err2dfl}) is analogous to the error equation (\ref{Berreqn1dLaplaceTilde}) in one dimensional case, and the matrix--valued function $C(\cdot,\tau)$ is the same with only the argument changed. Therefore, based on the normal mode analysis for the one dimensional problem in \S\ref{1d2o}-\ref{1d6o}, for $2p=2,4,6$ and $\tau>\tau_{2p}$ the boundary error in the Fourier and Laplace space is bounded as
\begin{equation*} 
\|\tilde{\hat\epsilon}_\omega\|^2_h\leq K_\omega \left|\frac{d^{p+2}}{dx^{p+2}}\tilde{\hat U}(0,\omega,s)\right|^2 h^{2p+4},
\end{equation*}
where $K_\omega$ depends on $\eta$ but not $h$. Since the above estimate holds for every $\omega$, we sum them up and obtain
\begin{equation*} 
\sum_\omega \|\tilde{\hat\epsilon}_\omega\|^2_h\leq \sum_\omega K_\omega \left|\frac{d^{p+2}}{dx^{p+2}}\tilde{\hat U}(0,\omega,s)\right|^2h^{2p+4}.
\end{equation*}
By using the Parseval's relation in the Fourier space, we obtain
\begin{equation*} 
\|\hat\epsilon\|^2_h\leq K_L \int_0^{2\pi} \left|\frac{d^{p+2}}{dx^{p+2}}\hat U(0,y,s)\right|^2 dy\ h^{2p+4}.
\end{equation*}
We then use Parseval's relation in the Laplace space, and obtain
\begin{equation*} 
\sqrt{\int_0^{t_f}\|\epsilon\|^2_h dt}\leq h^{p+2}\sqrt{K_L e^{2\eta t_f} \int_0^{t_f} \int_0^{2\pi} \left|\frac{d^{p+2}}{dx^{p+2}} U(0,y,t)\right|^2 dy dt}.
\end{equation*}
For $2p=2,4,6$, the boundary error is bounded in $\mathcal{O}(h^{p+2})$. We know from the energy estimate that the interior error is bounded in $\mathcal{O}(h^{2p})$. Therefore, we conclude that for second, fourth and sixth order methods, the convergence rates are 2, 4 and 5 only if $\tau>\tau_{2p}$. If $\tau=\tau_{2p}$, the boundary system is singular. The convergence rate $p+\frac{1}{2}$ given by the energy estimate is sharp. The numerical experiments in \S\ref{numericalexperiments} agree with this conclusion.

The above analysis can be carried out also for the Neumann problem and the interface problem (with conforming grid interfaces) in two dimensions. Comparing with the corresponding one dimensional problem, the only difference in the error equation is that $\tilde s$ is replaced by $\tilde s_+$. By Lemma \ref{splus} and the analysis thereafter, we conclude that the convergence result for the Neumann problem and the interface problem in two dimensions is the same as that for the corresponding one dimensional problem. For numerical experiments of these cases, we refer to \cite{Virta2014}.

It is not straightforward to generalize the above analysis to a two dimensional problem with non--periodic boundary conditions at all boundaries, or to a two dimensional problem with a non--conforming grid interface. Many numerical experiments of these cases, however, show an agreement with the convergence rate that is conjectured from the analysis of the corresponding one dimensional problem.

\section{Numerical experiments}\label{numericalexperiments}
In this section, we perform numerical experiments to verify the accuracy analysis. We investigate how the accuracy of the numerical solution is affected by the large truncation error localized near the boundary and grid interface. In the analysis, we use the half line and half plane problem. However, in the numerical experiments, we use a bounded domain and impose boundary conditions on all boundaries weakly.
We discretize in time using the classical fourth order Runge--Kutta method. The step size $\Delta t$ in time is chosen so that the temporal error has a negligible impact of the spatial convergence result. The value of $\Delta t$ is given in each numerical experiment.

The L$_2$ error and maximum error are computed as the norm of the difference between the exact solution $u^{ex}$ and the numerical solution $u^h$ according to
\begin{equation*}
\begin{split}
&\|u^h-u^{ex}\|_{\text{L}_2}=\sqrt{h^d(u^h-u^{ex})^T(u^h-u^{ex})}, \\
&\|u^h-u^{ex}\|_{\infty}=\max |u^h-u^{ex}|, 
\end{split}
\end{equation*}
where $d$ is the dimension of the equation. The convergence rates are computed by
\begin{equation*}
q=\log\left(\frac{\|u^h-u^{ex}\|}{\|u^{2h}-u^{ex}\|}\right) \bigg/ \log\left(\frac{1}{2}\right).
\end{equation*}

\subsection{One dimensional wave equation}
We choose the analytical solution
\begin{equation}\label{1danalytic}
u=\cos(10\pi x+1)\cos(10\pi t+2),\quad 0\leq x\leq 1,\ 0\leq t\leq 2,
\end{equation}
and use it to impose the Dirichlet boundary conditions at $x=0$ and $x=1$. The analytical solution and its derivatives do not vanish at the boundaries at the final time $t=2$. The L$_2$ error, the convergence rates in L$_2$ norm and maximum norm are shown in Table \ref{1d_no_int}. The convergence rates behave as expected. With a $2p^{th} (2p=2,4,6)$ order accuracy method, the convergence rate is $p+\frac{1}{2}$ in L$_2$ norm if the penalty parameter $\tau$ equals its limit. In this case, the pointwise error in the Laplace space is bounded in $\mathcal{O}(h^p)$. We can hope for $p^{th}$ order convergence rate in the maximum norm, which is observed as well. With an increased penalty parameter, the accuracy of the numerical solution is improved significantly, and the convergence rate $\min(2p,p+2)$ is obtained. For second and fourth order accuracy cases, the accuracy difference between $\tau=1.2\tau_{2p}$ and $\tau=3\tau_{2p}$ is almost invisible. For sixth order accuracy case, one can see the small improvement in accuracy between $\tau=1.2\tau_{2p}$ and $\tau=3\tau_{2p}$. The step size in time is $\Delta t=0.1h$ in all cases, except for the last two grid refinements of the sixth order scheme with $\tau=3\tau_{2p}$ where $\Delta t=0.05h$ is used in order to observe the expected convergence behaviour. Note that these step sizes are much smaller than the step size restricted by the stability condition. With $\tau=1.2\tau_{2p}$, the numerical method is stable if $\Delta t/h\leq c$, where the Courant number $c=1.3,0.9$ and 0.6 for second, fourth and sixth order method. For an increased penalty parameter $\tau=3\tau_{2p}$, the corresponding Courant numbers are 0.8, 0.5 and 0.4. 
\begin{table}
\footnotesize 
\centering
\begin{tabular}{c c c c c c c}
\toprule
$\tau=\tau_{2p}\quad$          &         Second Order            &    &  Fourth Order  &   &   Sixth Order &  \\ \midrule
$N$ &  $\|u^h-u^{ex}\|_{\text{L}_2}$ & $q_{\text{L}_2}/q_\infty$ & $\|u^h-u^{ex}\|_{\text{L}_2}$ & $q_{\text{L}_2}/q_\infty$  & $\|u^h-u^{ex}\|_{\text{L}_2}$ & $q_{\text{L}_2}/q_\infty$  \\ 
51   &  4.35                        &         & 1.60                         &         & 1.10                         & \\
101 &  1.58                        & 1.46/1.07 & $2.53\cdot 10^{-1}$ & 2.66/2.21 & $1.37\cdot 10^{-1}$ & 3.00/2.54 \\
201 & $5.62\cdot 10^{-1}$ & 1.49/1.05 & $4.27\cdot 10^{-2}$ & 2.56/2.21 & $1.34\cdot 10^{-2}$ & 3.35/3.02 \\
401 & $1.99\cdot 10^{-1}$ & 1.50/1.03 & $7.46\cdot 10^{-3}$ & 2.52/2.14 & $1.22\cdot 10^{-3}$ & 3.46/3.07 \\
801 & $7.04\cdot 10^{-2}$ & 1.50/1.02 & $1.31\cdot 10^{-3}$ & 2.50/2.08 & $1.08\cdot 10^{-4}$ & 3.49/3.05 \\
\midrule
 $\tau=1.2\tau_{2p}\quad$          &         Second Order            &    &  Fourth Order  &   &   Sixth Order &  \\ 
$N$ &  $\|u^h-u^{ex}\|_{\text{L}_2}$ & $q_{\text{L}_2}/q_\infty$ & $\|u^h-u^{ex}\|_{\text{L}_2}$ & $q_{\text{L}_2}/q_\infty$  & $\|u^h-u^{ex}\|_{\text{L}_2}$ & $q_{\text{L}_2}/q_\infty$  \\ 
51   &  $7.63\cdot 10^{-1}$ &                 & $4.84\cdot 10^{-2}$ &                 & $1.04\cdot 10^{-2}$ & \\
101 &  $1.51\cdot 10^{-1}$ & 2.34/2.42 & $1.74\cdot 10^{-3}$ & 4.80/4.85 & $8.86\cdot 10^{-5}$ & 6.87/6.90 \\
201 &  $3.56\cdot 10^{-2}$ & 2.09/2.12 & $1.08\cdot 10^{-4}$ & 4.01/4.12 & $2.51\cdot 10^{-6}$ & 5.14/4.46 \\
401 &  $8.77\cdot 10^{-3}$ & 2.02/2.03 & $6.93\cdot 10^{-6}$ & 3.96/3.93 & $7.28\cdot 10^{-8}$ & 5.11/4.38 \\
801 &  $2.19\cdot 10^{-3}$ & 2.00/2.01 & $4.41\cdot 10^{-7}$ & 3.97/4.00 & $1.54\cdot 10^{-9}$ & 5.56/5.13 \\
\midrule
 $\tau=3\tau_{2p}\quad$          &         Second Order            &    &  Fourth Order  &   &   Sixth Order &  \\ 
$N$ &  $\|u^h-u^{ex}\|_{\text{L}_2}$ & $q_{\text{L}_2}/q_\infty$ & $\|u^h-u^{ex}\|_{\text{L}_2}$ & $q_{\text{L}_2}/q_\infty$  & $\|u^h-u^{ex}\|_{\text{L}_2}$ & $q_{\text{L}_2}/q_\infty$  \\ 
51   &  $6.03\cdot 10^{-1}$ &                 & $3.50\cdot 10^{-2}$ &                 & $7.14\cdot 10^{-3}$   & \\
101 &  $1.47\cdot 10^{-1}$ & 2.03/2.11 & $1.70\cdot 10^{-3}$ & 4.37/4.32 & $6.87\cdot 10^{-5}$   & 6.70/6.22 \\
201 &  $3.55\cdot 10^{-2}$ & 2.05/2.10 & $1.07\cdot 10^{-4}$ & 3.98/4.00 & $1.37\cdot 10^{-6}$   & 5.65/5.47 \\
401 &  $8.77\cdot 10^{-3}$ & 2.02/2.03 & $6.92\cdot 10^{-6}$ & 3.95/3.96 & $1.84\cdot 10^{-8}$   & 6.22/5.06 \\
801 &  $2.19\cdot 10^{-3}$ & 2.00/2.01 & $4.41\cdot 10^{-7}$ & 3.97/3.98 & $3.44\cdot 10^{-10}$ & 5.74/4.81 \\
\bottomrule
\end{tabular}
\caption{Convergence study for one dimensional wave equation with Dirichlet boundary conditions.}
\label{1d_no_int}
\end{table}

\begin{table}
\footnotesize 
\centering
\begin{tabular}{c c c  c c c c c c}
\toprule
        & Fourth Order  &   &  Perturbed &  &  Sixth Order  &   &  Perturbed &   \\ \midrule
$N$ & $\|u^h-u^{ex}\|_{\text{L}_2}$ & $q_{\text{L}_2}$ & $\|u^h-u^{ex}\|_{\text{L}_2}$ & $q_{\text{L}_2}$  & $\|u^h-u^{ex}\|_{\text{L}_2}$ & $q_{\text{L}_2}$ & $\|u^h-u^{ex}\|_{\text{L}_2}$ & $q_{\text{L}_2}$\\ \midrule
51    & $1.84\cdot 10^{-2}$ &          & $1.46\cdot 10^{-2}$ &        & $9.56\cdot 10^{-3}$ &         & $9.56\cdot 10^{-3}$ & \\
101  & $1.36\cdot 10^{-3}$ & 3.76  & $3.79\cdot 10^{-3}$ &1.95 & $1.94\cdot 10^{-5}$ & 8.94 & $7.40\cdot 10^{-5}$ & 7.01\\
201  & $8.17\cdot 10^{-5}$ & 4.06  & $6.62\cdot 10^{-4}$ &2.52 & $3.04\cdot 10^{-7}$ & 6.00 & $7.40\cdot 10^{-6}$ & 3.32\\
401  & $4.88\cdot 10^{-6}$ & 4.07  & $9.03\cdot 10^{-5}$ &2.88 & $8.64\cdot 10^{-9}$ & 5.14& $5.12\cdot 10^{-7}$ & 3.85\\
801  & $2.97\cdot 10^{-7}$ & 4.04  & $1.16\cdot 10^{-5}$ &2.96 & $1.20\cdot 10^{-10}$ &6.18 & $3.29\cdot 10^{-8}$ & 3.96\\ \bottomrule
\end{tabular}
\caption{Convergence study for one dimensional wave equation with Neumann boundary condition.}
\label{1d_neu}
\end{table}

To test the Neumann problem, we use (\ref{1danalytic}) as the analytical solution and impose Neumann boundary condition at two boundaries $x=0$ and $x=1$. The L$_2$ error and the convergence rate in L$_2$ norm are shown in Table \ref{1d_neu} in Column 2 and 3 for the fourth order scheme. Fourth order convergence rate is clearly observed, which corresponds to the optimal accuracy order gain of two from the boundary truncation error. In the accuracy analysis in \S\ref{section_Neumann}, the optimal convergence rate relies on the fact that the truncation error vector is in the column space of $C_{4N}(0)$. We add a dissipative term to the boundary block of the SBP operator so that the boundary truncation error vector is in the same magnitude but no longer in the column space of $C_{4N}(0)$. The result is listed in Column 4 and 5 in Table $\ref{1d_neu}$. Now the gain in accuracy order is only one, and third order convergence rate is obtained as expected. Note that an energy estimate is also valid for the perturbed problem and guarantees stability. We repeat the same experiments with the sixth order scheme. The convergence rate is between five and six. We then add a dissipative term in the same way as for the fourth order scheme. The optimal accuracy order gain is lost, and the convergence rate drops to four.

Next, we test how the accuracy and convergence are affected by the large truncation error localized near a grid interface. We choose the same analytical solution (\ref{1danalytic}). The grid interface is located at $x=0.5$ in the computational domain $\Omega=[0,1]$, and the grid size ratio is $2:1$. The step size in time is $\Delta t=0.1h$, where $h$ is the smaller grid size.
Note that the analytical solution does not vanish at the grid interface. We use the SAT technique to impose the outer boundary condition weakly and choose the boundary penalty parameters strictly larger than its limit. The interface conditions are also imposed by the SAT technique. We use different interface penalty parameters to see how they affect the accuracy and convergence. The results are shown in Table \ref{1d_int}, where $N$ denotes the number of grid points in the coarse domain. 

\begin{table}
\footnotesize 
\centering
\begin{tabular}{c c c c c c c}
\toprule
$\tau=\tau_{2p}\quad$          &         Second Order            &    &  Fourth Order  &   &   Sixth Order &  \\ \midrule
$N$ &  $\|u^h-u^{ex}\|_{\text{L}_2}$ & $q_{\text{L}_2}/q_\infty$ & $\|u^h-u^{ex}\|_{\text{L}_2}$ & $q_{\text{L}_2}/q_\infty$  & $\|u^h-u^{ex}\|_{\text{L}_2}$ & $q_{\text{L}_2}/q_\infty$  \\ 
26   &  2.68                         &                 & 1.26                         &                 & $1.76\cdot 10^{-1}$ & \\
51   &  1.06                         & 1.34/0.84 & $1.86\cdot 10^{-1}$ & 2.76/2.26 & $4.56\cdot 10^{-2}$ & 1.94/1.45\\
101 &  $3.91\cdot 10^{-1}$ & 1.43/0.93 & $2.76\cdot 10^{-2}$ & 2.75/2.25 & $6.58\cdot 10^{-3}$ & 2.79/2.29 \\
201 &  $1.41\cdot 10^{-1}$ & 1.47/0.97 & $4.33\cdot 10^{-3}$ & 2.67/2.17 & $6.76\cdot 10^{-4}$ & 3.28/2.78 \\
401 &  $5.03\cdot 10^{-2}$ & 1.49/0.99 & $7.15\cdot 10^{-4}$ & 2.60/2.10 & $6.34\cdot 10^{-5}$ & 3.41/2.91 \\
\midrule
 $\tau=1.2\tau_{2p}\quad$          &         Second Order            &    &  Fourth Order  &   &   Sixth Order &  \\ 
$N$ &  $\|u^h-u^{ex}\|_{\text{L}_2}$ & $q_{\text{L}_2}/q_\infty$ & $\|u^h-u^{ex}\|_{\text{L}_2}$ & $q_{\text{L}_2}/q_\infty$  & $\|u^h-u^{ex}\|_{\text{L}_2}$ & $q_{\text{L}_2}/q_\infty$  \\ 
26   &  $5.49\cdot 10^{-1}$ &                 & $2.46\cdot 10^{-2}$ &                 & $1.09\cdot 10^{-2}$ & \\
51   &  $9.33\cdot 10^{-2}$ & 2.56/2.57 & $8.41\cdot 10^{-4}$ & 4.87/4.83 & $9.32\cdot 10^{-5}$ & 6.87/6.69\\
101 &  $2.21\cdot 10^{-2}$ & 2.08/2.11 & $5.36\cdot 10^{-5}$ & 3.97/4.02 & $1.84\cdot 10^{-6}$ & 5.66/5.11 \\
201 &  $5.48\cdot 10^{-3}$ & 2.01/2.03 & $3.56\cdot 10^{-6}$ & 3.91/3.90 & $4.48\cdot 10^{-8}$ & 5.36/4.41 \\
401 &  $1.37\cdot 10^{-3}$ & 2.00/2.01 & $2.30\cdot 10^{-7}$ & 3.95/3.94 & $1.05\cdot 10^{-9}$ & 5.41/5.24 \\
\midrule
 $\tau=3\tau_{2p}\quad$          &         Second Order            &    &  Fourth Order  &   &   Sixth Order &  \\ 
$N$ &  $\|u^h-u^{ex}\|_{\text{L}_2}$ & $q_{\text{L}_2}/q_\infty$ & $\|u^h-u^{ex}\|_{\text{L}_2}$ & $q_{\text{L}_2}/q_\infty$  & $\|u^h-u^{ex}\|_{\text{L}_2}$ & $q_{\text{L}_2}/q_\infty$  \\ 
26   &  $3.83\cdot 10^{-1}$ &                 & $2.07\cdot 10^{-2}$ &                 & $7.96\cdot 10^{-3}$ & \\
51   &  $9.04\cdot 10^{-2}$ & 2.08/2.12 & $8.31\cdot 10^{-4}$ & 4.64/4.64 & $5.69\cdot 10^{-5}$ & 7.13/6.42\\
101 &  $2.20\cdot 10^{-2}$ & 2.04/2.07 & $5.33\cdot 10^{-5}$ & 3.96/3.97 & $1.17\cdot 10^{-6}$ & 5.61/5.49 \\
201 &  $5.47\cdot 10^{-3}$ & 2.01/2.02 & $3.55\cdot 10^{-6}$ & 3.91/3.90 & $1.53\cdot 10^{-8}$ & 6.25/5.10 \\
401 &  $1.37\cdot 10^{-3}$ & 2.00/2.00 & $2.30\cdot 10^{-7}$ & 3.95/3.96 & $2.97\cdot 10^{-10}$ & 5.68/4.78 \\
\bottomrule
\end{tabular}
\caption{Convergence study for one dimensional wave equation with a grid interface.}
\label{1d_int}
\end{table}

According to the accuracy analysis in \S\ref{section_interface}, for second and fourth order accuracy methods with $\tau=\tau_{2p}$, the expected convergence rates are 1.5 and 2.5 in L$_2$ norm. This is clearly observed in Table \ref{1d_int}. We also observe in Table \ref{1d_int} that the convergence rate in the maximum norm behaves as $p$. With an increased penalty parameter, a much better convergence result is obtained. For the second and fourth order methods, we get the optimal (second and fourth, respectively) order of convergence in both L$_2$ norm and maximum norm. For the sixth order scheme, we expect fifth order convergence in L$_2$ norm. The numerical experiments actually give a higher convergence result. This has also been observed for Schr\"{o}dinger equation in \cite{Nissen2012,Nissen2013}. 
\begin{remark}
The numerical experiments with $8^{th}$ and $10^{th}$ order SBP operators are also carried out. When the penalty parameter equals its lower limit, 4.5 and 5.5 convergence rate in L$_2$ norm, and 4 and 5 convergence in maximum norm are clearly observed. With a larger penalty parameter, we expect the convergence to be 6 and 7, respectively. However, due to the high accuracy, the error decreases fast to the machine precision. Therefore, the expected convergence rates are not clearly observed.  
\end{remark}

\subsection{Two dimensional wave equation}
For two dimensional simulation, we choose the analytical solution
\begin{equation*}
u=\cos(12x+1)\cos(4\pi y+2)\cos(\sqrt{12^2+(4\pi)^2}t+3),\quad 0\leq x\leq 1,\ 0\leq y\leq 1,\ 0\leq t\leq 2,
\end{equation*}
and use it to impose the Dirichlet boundary condition at $x=0$ and $x=1$. Periodic boundary condition is imposed at $y=0$ and $y=1$. The numbers of grid points are the same in $x$ and $y$ direction, and denoted by $N$ in Table \ref{2d_no_int}. The step size in time is $\Delta t=0.1h$. We clearly observe that with a $2p^{th}$ order method and $\tau=\tau_{2p}$ the convergence rates in L$_2$ norm are $p+\frac{1}{2}$ and in maximum norm are $p$. With an increased penalty parameter, the errors in L$_2$ norm are much smaller. The convergence rates are $\min(2p,p+2)$ in both L$_2$ norm and maximum norm. The results from the numerical experiments agree well with the analysis in \S\ref{2danalysis}. 

\begin{table}
\footnotesize 
\centering
\begin{tabular}{c c c c c c c}
\toprule
$\tau=\tau_{2p}\quad$          &         Second Order            &    &  Fourth Order  &   &   Sixth Order &  \\ \midrule
$N$ &  $\|u^h-u^{ex}\|_{\text{L}_2}$ & $q_{\text{L}_2}/q_\infty$ & $\|u^h-u^{ex}\|_{\text{L}_2}$ & $q_{\text{L}_2}/q_\infty$  & $\|u^h-u^{ex}\|_{\text{L}_2}$ & $q_{\text{L}_2}/q_\infty$  \\ 
26   & $1.05\cdot 10^{-1}$ &                 & $3.48\cdot 10^{-2}$ &                 & $2.27\cdot 10^{-2}$ & \\
51   & $3.72\cdot 10^{-2}$ & 1.50/0.95 & $6.48\cdot 10^{-3}$ & 2.43/1.98 & $2.01\cdot 10^{-3}$ & 3.49/2.80 \\
101 & $1.26\cdot 10^{-2}$ & 1.56/1.02 & $1.17\cdot 10^{-3}$ & 2.47/2.03 & $1.74\cdot 10^{-4}$ & 3.53/3.05 \\
201 & $4.34\cdot 10^{-3}$ & 1.54/1.02 & $2.07\cdot 10^{-4}$ & 2.49/2.03 & $1.52\cdot 10^{-5}$ & 3.52/3.06 \\
401 & $1.51\cdot 10^{-3}$ & 1.53/1.01 & $3.68\cdot 10^{-5}$ & 2.50/2.02 & $1.34\cdot 10^{-6}$ & 3.51/3.04 \\
\midrule
 $\tau=1.2\tau_{2p}\quad$          &         Second Order            &    &  Fourth Order  &   &   Sixth Order &  \\ 
$N$ &  $\|u^h-u^{ex}\|_{\text{L}_2}$ & $q_{\text{L}_2}/q_\infty$ & $\|u^h-u^{ex}\|_{\text{L}_2}$ & $q_{\text{L}_2}/q_\infty$  & $\|u^h-u^{ex}\|_{\text{L}_2}$ & $q_{\text{L}_2}/q_\infty$  \\ 
26   &  $6.74\cdot 10^{-2}$ &                 & $9.52\cdot 10^{-3}$ &                 & $3.20\cdot 10^{-3}$ & \\
51   &  $1.77\cdot 10^{-2}$ & 1.93/1.96 & $2.13\cdot 10^{-4}$ & 5.49/5.22 & $1.06\cdot 10^{-4}$ & 4.91/4.26 \\
101 &  $4.53\cdot 10^{-3}$ & 1.96/1.96 & $1.62\cdot 10^{-5}$ & 3.71/3.57 & $1.26\cdot 10^{-6}$ & 6.40/6.04 \\
201 &  $1.14\cdot 10^{-3}$ & 1.99/2.01 & $6.62\cdot 10^{-7}$ & 4.62/4.21 & $3.54\cdot 10^{-8}$ & 5.15/4.79 \\
401 &  $2.86\cdot 10^{-4}$ & 2.00/2.00 & $3.83\cdot 10^{-8}$ & 4.11/4.01 & $6.00\cdot 10^{-10}$ & 5.88/5.53 \\
\bottomrule
\end{tabular}
\caption{Convergence study for two dimensional wave equation.}
\label{2d_no_int}
\end{table}

\section{Conclusion}\label{conclusion} 
For the second order wave equation a stable numerical scheme does not automatically satisfy the determinant condition. We have considered stable SBP--SAT schemes for the Dirichlet, Neumann and interface problems. In all these cases the boundary truncation error is larger than the interior truncation error. We find that only the Dirichlet problem with penalty parameter greater than its limit value satisfies the determinant condition, and in agreement with \cite{Gustafsson2013,Svard2006}, and a gain of two in accuracy order for the boundary error follows. Also in this case, the determinant condition is necessary for the optimal convergence rate. 

For all the other cases the determinant condition is not satisfied. For most of these cases there is a gain in accuracy of order one or even two, though the energy estimate only implies a gain of order 1/2. We show that a careful analysis of the effect of the boundary truncation error yields sharp estimates. We also show that the results from analysis in one space dimension are valid in two space dimensions.

In particular we have found that to get the optimal gain in accuracy order for the Dirichlet problem and the grid interface problem, the penalty parameter should be chosen larger than its limit value given by stability analysis. However, we should not choose a very large penalty parameter since it does not decrease further the error in the solution but leads to a very small Courant number, thus an unnecessarily small step size in time.

\section*{Appendix 1}
The matrix $C_{4D}(0,\tau)$ in \S\ref{1d4o} is 
\begin{equation*}
C_{4D}(0,\tau)=\begin{bmatrix}
-3 & 3-4\sqrt{3} & \frac{-122+48\tau}{17} & 5 \\
-1 & -1 & \frac{85}{59} & 2 \\
\frac{55}{43} & \frac{85+12\sqrt{3}}{43} & -\frac{68}{43} & -\frac{59}{43} \\
-\frac{1}{49} & -\frac{37+3\sqrt{3}}{49} & \frac{17}{49} & 0
\end{bmatrix}
\end{equation*}

The first four columns of the matrix $C_{6D}(0,\tau)$ in \S\ref{1d6o} are 
\begin{equation*}
\underbrace{C_{6D}(0,\tau)}_{\text{column 1--4}}=\begin{bmatrix}
k(\tau) &   8.024525606 &  -8.215717879 & 2.979430728\\
                   1.823470407 &    2.35039818 &  -1.867463026 & 0.5704778157\\
                  -4.136345752 &  -4.137556867 &   8.108447068 & -4.615068241 \\
                  0.8614698016 &   1.159078186 &  -3.511693724  & 2.565973129 \\
                 -0.0951377428 & -0.9156510516 &   1.987601033 & -1.25102831 \\
                -0.04002001476 &  0.1875223549 & -0.3471267779  & 0.1996244378
\end{bmatrix}
\end{equation*}
where $k(\tau)=3.165067038\tau - 9.382128117$, and the last two columns of the matrix $C_{6D}(0,\tau)$ in \S\ref{1d6o} are 
\begin{equation*}
\underbrace{C_{6D}(0,\tau)}_{\text{column 5--6}}=\begin{bmatrix}
    3.368616929 + 0.02305363188$i$ & 3.368616929 - 0.02305363188$i$ \\
 1.00246526 + 0.04694800264$i$ &   1.00246526 - 0.04694800264$i$ \\
 -6.789853693 - 0.2265966418$i$ & -6.789853693 + 0.2265966418$i$ \\
4.592688087 + 0.1965568935$i$ &  4.592688087 - 0.1965568935$i$ \\
-3.188300477 - 0.2835125258$i$ & -3.188300477 + 0.2835125258$i$ \\
0.3280801113 + 0.1028755037$i$ & 0.3280801113 - 0.1028755037$i$
\end{bmatrix}
\end{equation*}

\section*{Appendix 2}
In \S\ref{sec-Neumann-46}, the boundary system for fourth and sixth order schemes are analyzed. We have for the fourth order scheme
\begin{equation*}
C_{4N}(0)=\begin{bmatrix}
\frac{54}{17} & -\frac{59}{17} & \frac{5}{17} & \frac{11-4\sqrt{3}}{17} \\
-1 & 2 & -1 & -1 \\
\frac{4}{43} & -\frac{59}{43} & \frac{55}{43} & \frac{85+12\sqrt{3}}{43} \\
\frac{1}{49} & 0 & -\frac{1}{49} & -\frac{37+8\sqrt{3}}{49}
\end{bmatrix},\ C'_{4N}(0)=\begin{bmatrix}
0 & 0 & -0.0588 & 0\\
0 & 0 & 0 & 0 \\
0 & 0 & 1.1860 & 0 \\
0 & 0 & -0.0408 & 0
\end{bmatrix},
\end{equation*}
and 
\begin{equation*}
\hat T_u^{4,Neu,B}=\begin{bmatrix}
\frac{43}{204} \\
-\frac{1}{12} \\
\frac{5}{516} \\
\frac{11}{588}
\end{bmatrix}\hat U_{xxxx}(0,s).
\end{equation*}
For sixth order scheme, we have 
\begin{equation*}
\underbrace{C_{6N}(0)}_{\text{column 1--4}}=\begin{bmatrix}
   3.8056512077 & -4.6357425452  &  1.2794832344 & -0.4493918968 \\
  -1.0534129693 &   2.3503981797 & -1.8674630262 &   0.5704778157 \\
   0.6441780401 & -4.1375568671  &  8.1084470675 & -4.6150682405 \\ 
  -0.2133575916 & 1.1590781862   & -3.5116937240 &  2.5659731293 \\
   0.1790783293 & -0.9156510516  & 1.9876010325  & -1.2510283103 \\
  -0.0400200148 &  0.1875223549  & -0.3471267779 & 0.1996244378 
  \end{bmatrix},
\end{equation*}
\begin{equation*}
\underbrace{C_{6N}(0)}_{\text{column 5--6}}=\begin{bmatrix}
-0.8104124471 - 0.0403172285$i$   & -0.8104124471 + 0.0403172285$i$ \\
  1.0024652602 + 0.0469480026$i$ &  1.0024652602 - 0.0469480026$i$ \\
 -6.7898536930 - 0.2265966418$i$  & -6.7898536930 + 0.2265966418$i$ \\
  4.5926880872 + 0.1965568935$i$ &  4.5926880872 - 0.1965568935$i$ \\
 -3.1883004772 - 0.2835125258$i$  & -3.1883004772 + 0.2835125258$i$ \\
  0.3280801113 + 0.1028755037$i$  & 0.3280801113 - 0.1028755037$i$
\end{bmatrix},
\end{equation*}
\begin{equation*}
C'_{6N}(0)=\begin{bmatrix}
                   0&                   0&                   0&  -0.2598847290&                   0&                   0 \\
                   0&                   0&                   0&   0.3269055745&                   0&                   0 \\ 
                   0&                   0&                   0&  -1.7658674536&                   0&                   0 \\
                   0&                   0&                   0&   1.8336101263&                   0&                   0  \\
                   0&                   0&                   0&  -0.6935339173&                   0&                   0 \\
                   0&                   0&                   0&   0.0921421124&                    0&                  0 \\
\end{bmatrix},
\end{equation*}

\begin{equation*}
\hat T_u^{6,Neu,B}=\begin{bmatrix}
-0.3287481378 \\
0.2200796359 \\
  -0.5608447068 \\
   0.2044006966 \\
  -0.1710063053 \\
   0.0514543047 
\end{bmatrix}\hat U_{xxxxx}(0,s).
\end{equation*}

\section*{Appendix 3}
In \S\ref{1dint2o}, the Taylor expansion of $C_{2I}(\tilde s,\tau)$ at $\tilde s=0$ is
\begin{equation*}
C_{2I}(\tilde{s},\tau)=C_{2I}(0,\tau)+\tilde s C'_{2I}(0,\tau)+\mathcal{O}(\tilde{s}^2),
\end{equation*}
where
\begin{equation*}
\begin{split}
&C_{2I}(0,\tau)=\begin{bmatrix}
 -\frac{1}{2} & \frac{r}{2}  & -1+2\tau h & 0 & \frac{3}{2}+\frac{3r}{2}-2\tau h & -2r \\
-1 & 0 & 0 & 2 & -1 & 0 \\
1 & 0 & -\frac{1}{4} & -1 & \frac{1}{4} & 0 \\
\frac{1}{2r} & -\frac{1}{2} & \frac{3}{2}+\frac{3}{2r}-\frac{2\tau h}{r} & -\frac{2}{r} & -1+\frac{2\tau h}{r} & 0 \\
0 & -1 & -1 & 0 & 0 & 2 \\
0 & 1 & \frac{1}{4} & 0 & -\frac{1}{4} & -1
\end{bmatrix},  \\
&C'_{2I}(0,\tau)=\begin{bmatrix}
0 & 0 & 0 & 0 & 0 & 0 \\
0 & 0 & 0 & 0 & 0 & 0 \\
1 & 0 & 0 & 0 & 0 & 0 \\
0 & 0 & 0 & 0 & 0 & 0 \\
0 & 0 & 0 & 0 & 0 & 0 \\
0 & 1/r & 0 & 0 & 0 & 0 
\end{bmatrix}.
\end{split}
\end{equation*}

In \S\ref{1dint4o}, the Taylor expansion of $C_{4I}(\tilde s,\tau)$ at $\tilde s=0$ is 
\begin{equation*}
C_{4I}(\tilde{s},\tau)=C_{4I}(0,\tau)+\tilde s C'_{4I}(0,\tau)+\mathcal{O}(\tilde{s}^2),
\end{equation*}
where
\begin{equation*}
\begin{split}
&C_{4I}(0,\tau)=\begin{bmatrix}
-\frac{23}{17} & \frac{31-36\sqrt{3}}{17} & \frac{28r}{17} & \frac{4r(8\sqrt{3}-5)}{17} 
& \frac{48h\tau-34}{17} & \frac{13}{17} & \frac{44r-48h\tau+44}{17} & -\frac{72r}{17} \\
-1 & -1 & 0 & 0 & \frac{13}{59} & 2 & -\frac{72}{59} & 0 \\
\frac{55}{43} & \frac{12\sqrt{3}+85}{43} & 0 & 0 & -\frac{32}{43} & -\frac{59}{43} & \frac{36}{43} & 0 \\
-\frac{1}{49} & -\frac{8\sqrt{3}+37}{49} & 0 & 0 & \frac{9}{49} & 0 & -\frac{8}{49} & 0 \\
\frac{28}{17r} & \frac{4(8\sqrt{3}-5)}{17r} & -\frac{23}{17} & \frac{31-36\sqrt{3}}{17} 
& \frac{44r-48h\tau+44}{17r} & -\frac{72}{17r} & \frac{48h\tau-34r}{17r} & \frac{13}{17} \\
0 & 0 & -1 & -1 & -\frac{72}{59} & 0 & \frac{13}{59} & 2 \\
0 & 0 & \frac{55}{43} & \frac{12\sqrt{3}+85}{43} & \frac{36}{43} & 0 & -\frac{32}{43} & -\frac{59}{43} \\
0 & 0 & -\frac{1}{49} & -\frac{8\sqrt{3}+37}{49} & -\frac{8}{49} & 0 & \frac{9}{49} & 0
\end{bmatrix},\\
&C'_{4I}(0,\tau)=\begin{bmatrix}
-\frac{9}{17} & 0  & \frac{8}{17} & 0 & 0 & 0 & 0 & 0 \\
0 & 0 & 0 & 0 & 0 & 0 & 0 & 0 \\
\frac{51}{43} & 0 & 0 & 0 & 0 & 0 & 0 & 0 \\
-\frac{2}{49} & 0 & 0 & 0 & 0 & 0 & 0 & 0 \\
\frac{8}{17r} & 0  & -\frac{9}{17r} & 0 & 0 & 0 & 0 & 0 \\
0 & 0 & 0 & 0 & 0 & 0 & 0 & 0 \\
0 & 0 & \frac{51}{43r} & 0 & 0 & 0 & 0 & 0 \\
0 & 0  & -\frac{2}{49r} & 0 & 0 & 0 & 0 & 0
\end{bmatrix}.
\end{split}
\end{equation*}

\section*{Acknowledgement} 
This work has been partially financed by the grant 2014--6088 from the Swedish Research Council. The computations were performed on resources provided by the Swedish National Infrastructure for Computing (SNIC) at Uppsala Multidisciplinary Center for Advanced Computational Science (UPPMAX) under project snic2014-3-32.

\end{document}